\documentclass[11pt]{amsart}
\usepackage{amscd}                                             \usepackage[arrow,matrix]{xy}
\usepackage{graphicx}
\usepackage{hyperref}
\usepackage{comment}
\usepackage{color}
\usepackage{amsmath, latexsym, amssymb}
\input xypic
\numberwithin{equation}{section}
\theoremstyle{plain}
\newtheorem{lemma}{Lemma}[section]
\newtheorem{proposition}[lemma]{Proposition}
\newtheorem{theorem}[lemma]{Theorem}
\newtheorem{corollary}[lemma]{Corollary}

\theoremstyle{definition}
\newtheorem{definition}[lemma]{Definition}
\newtheorem{remark}[lemma]{Remark}
\newtheorem{example}[lemma]{Example}

\DeclareGraphicsRule{.tif}{png}{.png}{`convert #1 `dirname #1`/`basename #1 .tif`.png} 
\begin{document}
\newcommand{\R}{{\mathbb R}}
\newcommand{\C}{{\mathbb C}}
\newcommand{\F}{{\mathbb F}}
\renewcommand{\O}{{\mathbb O}}
\newcommand{\Z}{{\mathbb Z}} 
\newcommand{\N}{{\mathbb N}}
\newcommand{\Q}{{\mathbb Q}}
\renewcommand{\H}{{\mathbb H}}

\newcommand{\dass}{da\ss~}

\newcommand{\Aa}{{\mathcal A}}
\newcommand{\Bb}{{\mathcal B}}
\newcommand{\Cc}{{\mathcal C}}    
\newcommand{\Dd}{{\mathcal D}}
\newcommand{\Ee}{{\mathcal E}}
\newcommand{\Ff}{{\mathcal F}}
\newcommand{\Gg}{{\mathcal G}}    
\newcommand{\Hh}{{\mathcal H}}
\newcommand{\Kk}{{\mathcal K}}
\newcommand{\Ii}{{\mathcal I}}
\newcommand{\Jj}{{\mathcal J}}
\newcommand{\Ll}{{\mathcal L}}    
\newcommand{\Mm}{{\mathcal M}}    
\newcommand{\Nn}{{\mathcal N}}
\newcommand{\Oo}{{\mathcal O}}
\newcommand{\Pp}{{\mathcal P}}
\newcommand{\Qq}{{\mathcal Q}}
\newcommand{\Rr}{{\mathcal R}}
\newcommand{\Ss}{{\mathcal S}}
\newcommand{\Tt}{{\mathcal T}}
\newcommand{\Uu}{{\mathcal U}}
\newcommand{\Vv}{{\mathcal V}}
\newcommand{\Ww}{{\mathcal W}}
\newcommand{\Xx}{{\mathcal X}}
\newcommand{\Yy}{{\mathcal Y}}
\newcommand{\Zz}{{\mathcal Z}}

\newcommand{\zt}{{\tilde z}}
\newcommand{\xt}{{\tilde x}}
\newcommand{\Ht}{\widetilde{H}}
\newcommand{\ut}{{\tilde u}}
\newcommand{\Mt}{{\widetilde M}}
\newcommand{\Llt}{{\widetilde{\mathcal L}}}
\newcommand{\yt}{{\tilde y}}
\newcommand{\vt}{{\tilde v}}
\newcommand{\Ppt}{{\widetilde{\mathcal P}}}
\newcommand{\bp }{{\bar \partial}} 

\newcommand{\Remark}{{\it Remark}}
\newcommand{\Proof}{{\it Proof}}
\newcommand{\ad}{{\rm ad}}
\newcommand{\Om}{{\Omega}}
\newcommand{\om}{{\omega}}
\newcommand{\eps}{{\varepsilon}}
\newcommand{\Di}{{\rm Diff}}
\newcommand{\vol}{{\rm vol}}
\newcommand{\Pro}[1]{\noindent {\bf Proposition #1}}
\newcommand{\Thm}[1]{\noindent {\bf Theorem #1}}
\newcommand{\Lem}[1]{\noindent {\bf Lemma #1 }}
\newcommand{\An}[1]{\noindent {\bf Anmerkung #1}}
\newcommand{\Kor}[1]{\noindent {\bf Korollar #1}}
\newcommand{\Satz}[1]{\noindent {\bf Satz #1}}

\renewcommand{\a}{{\mathfrak a}}
\renewcommand{\b}{{\mathfrak b}}
\newcommand{\e}{{\mathfrak e}}
\renewcommand{\k}{{\mathfrak k}}
\newcommand{\pg}{{\mathfrak p}}
\newcommand{\g}{{\mathfrak g}}
\newcommand{\gl}{{\mathfrak gl}}
\newcommand{\h}{{\mathfrak h}}
\renewcommand{\l}{{\mathfrak l}}
\newcommand{\sm}{{\mathfrak m}}
\newcommand{\n}{{\mathfrak n}}
\newcommand{\s}{{\mathfrak s}}
\renewcommand{\o}{{\mathfrak o}}
\newcommand{\so}{{\mathfrak so}}
\renewcommand{\u}{{\mathfrak u}}
\newcommand{\su}{{\mathfrak su}}
\newcommand{\ssl}{{\mathfrak sl}}
\newcommand{\ssp}{{\mathfrak sp}}
\renewcommand{\t}{{\mathfrak t }}
\newcommand{\Cinf}{C^{\infty}}
\newcommand{\la}{\langle}
\newcommand{\ra}{\rangle}
\newcommand{\half}{\scriptstyle\frac{1}{2}}
\newcommand{\p}{{\partial}}
\newcommand{\notsub}{\not\subset}
\newcommand{\iI}{{I}}               
\newcommand{\bI}{{\partial I}}      
\newcommand{\LRA}{\Longrightarrow}
\newcommand{\LLA}{\Longleftarrow}
\newcommand{\lra}{\longrightarrow}
\newcommand{\LLR}{\Longleftrightarrow}
\newcommand{\lla}{\longleftarrow}
\newcommand{\INTO}{\hookrightarrow}

\newcommand{\QED}{\hfill$\Box$\medskip}
\newcommand{\UuU}{\Upsilon _{\delta}(H_0) \times \Uu _{\delta} (J_0)}
\newcommand{\bm}{\boldmath}

\title[The  uniqueness of the Fisher metric]{\large The  uniqueness of the  Fisher metric  as information  metric}
\author[ H.V. L\^e ]{H\^ong V\^an L\^e }
 \date{\today}
 \thanks {H.V.L. is partially supported by RVO: 67985840}

\medskip
\address{
Institute  of Mathematics of ASCR,
Zitna 25, 11567  Praha 1, Czech Republic
}
\abstract
We define    a mixed topology on the fiber  space  $\cup_\mu \oplus^n L^n(\mu)$  over the space $\Mm(\Om)$ of all finite non-negative measures $\mu$ on  a   separable metric  space $\Om$ provided  with  Borel $\sigma$-algebra. We   define a   notion of strong continuity of a covariant $n$-tensor  field on $\Mm(\Om)$. Under  the assumption of strong continuity  of an information  metric  we prove  the uniqueness  of the Fisher metric  as information  metric  on  statistical models associated with $\Om$. Our  proof realizes  a    suggestion due to Amari and Nagaoka  to derive  the uniqueness of the Fisher metric  from  the   special case  proved by Chentsov by using  a special  kind of limiting procedure.  The obtained   result  extends   the   monotonicity characterization  of the Fisher metric on
statistical models associated with  finite  sample spaces  and  complement    the    uniqueness  theorem by  Ay-Jost-L\^e-Schwachh\"ofer that characterizes 
the  Fisher  metric by its  invariance  under sufficient statistics.   
\endabstract 
\keywords{monotonicity of the Fisher metric; Chentsov's theorem; mixed topology}

\subjclass[2010]{Primary 62B10, 60B05}

\maketitle
\tableofcontents

\section{Introduction}\label{sec:intro}

Recent successful  applications   of information  geometry, see e.g.  \cite{Amari1987, AN2000, AyJost, Shahshahani1979}, where the Fisher metric plays  a  fundamental role,   motivate us   to find an answer to the  following  important question.  Is there   another   metric on    statistical models  with  natural properties,  which   we could name  information metric?  

Intuitively,  information metric  should reflect the amount of non-negative information of a statistical model, moreover 
\begin{itemize}
\item   it should measure ``information loss"   associated with a data processing  and this information loss is a non-negative quantity   \cite[Axiom A]{Chentsov1978}; 
\item  it must be invariant under sufficient statistics, that is,  mappings between sample spaces that preserve all information  about the parameter $x$. 
\end{itemize}

In  statistical decision theory, a data  processing is a  statistical decision rule,  which can be deterministic  or randomized. A deterministic  decision rule is a  measurable map,  which is also called  a statistic. An indeterministic  decision rule is a Markov transition distribution \cite{Chentsov1982}. 
Recently,   Ay-Jost-L\^e-Schwachh\"ofer   showed  that a  transformation    between statistical models which is  associated with a Markov  transition distribution is a composition of  the inverse of  a transformation,  which is  associated with a sufficient statistic, and  a transformation which is associated  with a statistic  \cite[Theorem 4.10]{AJLS2012}. Hence, assuming  the condition of invariance  under sufficient   statistics,  the ``information loss" condition   is reduced to the case where  data processing  is associated with a statistic.

 Using the concept  of a  continuous local statistical  covariant tensor field   on statistical  models \cite[Definition 2.8]{AJLS2012}, see also Definition \ref{def:loc} below,  and utilizing the above discussion, we propose the following
 \
 
 \begin{definition}\label{def:information}  Given a class  $\{ \Om \}$ of measure spaces, {\it an information  metric}  on   statistical models (Definition \ref{def:gen}), or  more generally, on  parametrized  measure models   $(M, \Om, \mu, p)$ where $\Om \in \{ \Om \}$   is   a  continuous  local statistical non-negative definite quadratic  form $F_{(M, \Om, \mu, p)}$ (Definitions \ref{df:tensor}, \ref{def:new}, \ref{def:loc})   that
  satisfies the following  two conditions:
\begin{enumerate}
\item  the ``information loss"   $F_{(M, \Om, \mu, p)} - F_{(M, \Om_1, \kappa_*(\mu), \kappa_*(p))}$ is  a non-negative    definite quadratic form   for any statistic $\kappa: \Om \to \Om_1$;
\item the  ``information loss"  $F_{(M, \Om, \mu, p)} -F_{(M, \Om_1, \kappa_*(\mu), \kappa_*(p))}$ is zero (quadratic form)  if  $\kappa$ is sufficient  with respect to the parameter $x \in M$.
\end{enumerate}
\end{definition}

Each of the conditions (1) and (2)  in Definition \ref{def:information} is natural  and  has its own  appeal. The condition (2) has been   considered in    \cite{AJLS2012} as a  criterion for a  natural metric  on parametrized   measure models. The  condition (1)  is simpler  formulated than the condition (2), since   it does not depend on the  notion    of a sufficient  statistic, that  depends  on  a statistical model under consideration  and depends on the notion of information implicitly. 
(For a modern  definition of a  sufficient  statistic  we refer the reader to \cite{AJLS2013}, where Ay-Jost-L\^e-Schwachh\"ofer    propose  a geometric definition  of a  sufficient statistic   associated with a (signed) parametrized measure model in terms of  Banach   manifolds in consideration, which agrees with the old concept of sufficient statistics  that uses the Fisher-Neyman  characterization.)

In 1972 Chentsov  proved  that on   statistical models $(M, \Om, \mu, p)$ associated with  finite sample spaces $\Om$ 
the Fisher  metric  $g^F$ (Example \ref{ex:fish})  is a
unique metric, up to a multiplicative  constant,   that  satisfies  (2) \cite{Chentsov1982}. 
In \cite[Corollary 4.11]{AJLS2012}, for finite  sample  spaces $\Om$,  we derived  the  uniqueness (up to  a multiplicative constant)  of a metric that satisfies the condition  (1) on  statistical models  associated with  $\Om$  from the uniqueness  of a  metric  on statistical models that satisfies the condition (2) on $\Om$, see  Proposition \ref{cor:chentsov} and the Appendix at the  end of this note  for a discussion on the Chentsov theorem.
 The converse  statement,  every metric that satisfies  the condition (2)  also satisfies the condition (1),  follows  from the monotonicity  theorem  for the Fisher metric on statistical models  associated with finite sample spaces.


In 2012 Ay-Jost-L\^e-Schwachh\"ofer   proved  that  the Fisher  metric is  a  unique  metric,  up to  a multiplicative   constant, on statistical  models that  satisfies (2)  \cite[Remark 3.23]{AJLS2012}. (On   parametrized   measure  models  there  are  many  information metrics  that  satisfy the condition (2) \cite[Theorem 2.10]{AJLS2012}. This fact has been   observed  earlier   for      parametrized  measure models  associated   with    finite sample spaces  by Campbell in \cite{Campbell1986}). Further, Theorem 3.11  in \cite{AJLS2012}  states that, the Fisher  metric  satisfies  (1)  if  $\Om, \Om_1$ are smooth manifolds and $\mu$ is dominated by a Lebesgue measure.

\

In our  paper  we extend   the  aforementioned   results  as follows. Our first  observation is the following

\begin{theorem}\label{thm:main1}(The monotonicity of the Fisher metric) Let $\Om_1$, $\Om_2$  be  topological  spaces  with Borel $\sigma$-algebra, $\kappa: \Om_1 \to \Om_2$  a statistic.   Assume that  $(M, \Om_1, \mu_1, p_1)$   and $(M, \Om_2, \kappa_*(\mu_1), \kappa_*(p_1))$ are   $2$-integrable  parametrized measure models. Then    
for all $x \in  M$  and $V\in T_xM$   we have  $g ^F_{(M, \Om_1, \mu_1, p_1)}(V, V) \ge  g^F_{(M, \Om_2, \kappa_*(\mu_1), \kappa_*(p_1))}(V,V)$.  
\end{theorem}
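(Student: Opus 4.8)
The plan is to realize both Fisher quadratic forms as squared $L^2$-norms of logarithmic derivatives and to identify the one on $\Om_2$ with an orthogonal projection of the one on $\Om_1$, so that monotonicity becomes immediate from the fact that a conditional expectation is norm-nonincreasing. Fix $x\in M$ and $V\in T_xM$, let $\gamma$ be a curve in $M$ representing $V$, and abbreviate $P:=p_1(x)\in\Mm_+(\Om_1)$, so that $\kappa_*(p_1)(x)=\kappa_*P$ and $\kappa_*(\mu_1)=:\nu$. I would first record that $P\ll\mu_1$ forces $\kappa_*P\ll\nu$, so that $q:=dP/d\mu_1$ on $\Om_1$ and $\bar q:=d(\kappa_*P)/d\nu$ on $\Om_2$ are defined, and that by $2$-integrability the logarithmic derivatives
\[
\partial_V\log p_1(x):=\frac{d(\partial_V p_1(x))}{dP}\in L^2(\Om_1,P),\qquad
\partial_V\log\kappa_*(p_1)(x):=\frac{d(\partial_V\kappa_*(p_1)(x))}{d(\kappa_*P)}\in L^2(\Om_2,\kappa_*P)
\]
exist and satisfy $g^F_{(M,\Om_1,\mu_1,p_1)}(V,V)=\|\partial_V\log p_1(x)\|^2_{L^2(\Om_1,P)}$ and similarly on $\Om_2$; here $\partial_V p_1(x)$ is the derivative at $0$ of the curve of signed measures $t\mapsto p_1(\gamma(t))$, which is $\ll P$ by $2$-integrability.

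Writing $\mathcal B_{\Om_i}$ for the Borel $\sigma$-algebra of $\Om_i$ and $\mathcal A:=\kappa^{-1}(\mathcal B_{\Om_2})\subseteq\mathcal B_{\Om_1}$, the change-of-variables formula $\int_{\Om_1}(f\circ\kappa)^2\,dP=\int_{\Om_2}f^2\,d(\kappa_*P)$ shows that the pullback
\[
\iota\colon L^2(\Om_2,\kappa_*P)\lra L^2(\Om_1,P),\qquad f\longmapsto f\circ\kappa,
\]
is a linear isometry whose image consists of $\mathcal A$-measurable functions. The heart of the argument will be the identity
\[
\iota\bigl(\partial_V\log\kappa_*(p_1)(x)\bigr)=\mathbb E_P\!\left[\,\partial_V\log p_1(x)\mid\mathcal A\,\right],
\]
the right-hand side being the conditional expectation, i.e.\ the orthogonal projection onto $L^2(\Om_1,\mathcal A,P)$. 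Granting it, one concludes
\[
g^F_{(M,\Om_2,\kappa_*(\mu_1),\kappa_*(p_1))}(V,V)=\bigl\|\iota(\partial_V\log\kappa_*(p_1)(x))\bigr\|^2=\bigl\|\mathbb E_P[\partial_V\log p_1(x)\mid\mathcal A]\bigr\|^2\le\bigl\|\partial_V\log p_1(x)\bigr\|^2=g^F_{(M,\Om_1,\mu_1,p_1)}(V,V),
\]
since an orthogonal projection does not increase the norm.

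To prove the identity I would note that $\iota$ applied to anything is $\mathcal A$-measurable, so it suffices to verify the defining integral relation of conditional expectation against the generators $\mathbf 1_{\kappa^{-1}(A)}$, $A\in\mathcal B_{\Om_2}$. On one side $\int_{\kappa^{-1}(A)}\partial_V\log p_1(x)\,dP=(\partial_V p_1(x))(\kappa^{-1}(A))$; on the other $\int_{\kappa^{-1}(A)}\iota(\partial_V\log\kappa_*(p_1)(x))\,dP=\int_A\partial_V\log\kappa_*(p_1)(x)\,d(\kappa_*P)=(\partial_V\kappa_*(p_1)(x))(A)$; and these agree because $\kappa_*$, being a norm-nonincreasing linear map between the relevant spaces of signed measures, commutes with the derivative $\partial_V$, so $\partial_V\kappa_*(p_1)(x)=\kappa_*(\partial_V p_1(x))$, and $(\kappa_*\sigma)(A)=\sigma(\kappa^{-1}(A))$ by the definition of $\kappa_*$. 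Equivalently, one could argue directly via the conditional Jensen inequality for the jointly convex function $(a,b)\mapsto a^2/b$ on $\R\times\R_{>0}$ evaluated at $(\partial_V q,\,q)$, using the elementary facts $\bar q\circ\kappa=\mathbb E_{\mu_1}[q\mid\mathcal A]$ and $(\partial_V\bar q)\circ\kappa=\mathbb E_{\mu_1}[\partial_V q\mid\mathcal A]$ together with the tower property; this is the same computation in disguise.

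The step I expect to be the main obstacle is the rigorous justification of the commutation of $\partial_V$ with $\kappa_*$ and with integration against a fixed Borel set: this must be extracted from the precise definitions of a $2$-integrable parametrized measure model and of the mixed topology on $\cup_\mu\oplus^n L^n(\mu)$, and one must also take care of the $P$-null set on which $q$ vanishes, where $\partial_V\log p_1(x)$ is set to $0$ consistently with $\partial_V p_1(x)\ll P$. Everything else reduces to routine manipulation of Radon--Nikodym derivatives and the characterizing property of conditional expectation.
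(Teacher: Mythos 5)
Your proposal is correct and follows essentially the same route as the paper: the paper's proof also identifies $\kappa_*$ applied to the score with a conditional expectation and concludes via its $L^2$-norm contraction (Lemma \ref{prop:genf}), together with the commutation $\p_V\kappa_* = \kappa_*\p_V$. The only presentational difference is that the paper invokes Remark \ref{rem:log} to normalize the reference measure so that $p_1(x)=\mu_1$ and $\bar p_1(x,\cdot)=1$ at the point considered, whereas you keep a general reference measure and verify the conditional-expectation identity by testing against indicators of sets $\kappa^{-1}(A)$ — both arguments rest on the same two facts and on the same (lightly justified) interchange of $\p_V$ with $\kappa_*$.
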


Theorem \ref{thm:main1}    is possibly known  to experts in the field, but   we include  it  here   as well as its   short proof since we  have not seen  a  precise  statement with  a  proof of it  in   an available source  and we  wish to discuss   its consequence. 
We obtain  immediately  from the Ay-Jost-L\^e-Schwachh\"ofer theorem \cite[Remark 3.23]{AJLS2012} and  Theorem \ref{thm:main1} the  following

\begin{corollary}\label{cor:comp}  Let $\{ \Om\} $ be  the class  of    topological spaces provided with Borel $\sigma$-algebra.
Any    continuous local statistical non-negative definite quadratic  form $F$  on  statistical models associated with  $\{ \Om\}$   that satisfies  the condition (2) in Definition \ref{def:information}    also satisfies the condition (1) in Definition \ref{def:information}. In other  words, the condition (2)  is stronger  than the condition (1) for those $F$. 
\end{corollary}

To  prove the   uniqueness     result  for  an information metric that satisfies  the weaker monotonicity condition (1) in Definition \ref{def:information} we     pose  a topological condition on  such an information metric.  This condition  is formulated  in terms  of the    strong  continuity, the notion  we introduce  in Definition \ref{def:strong}.

\

For a measurable  space $(\Om, \Sigma)$  let us denote by $\Mm(\Om)$ the subset  of all finite non-negative measures  on $\Om$.

\begin{theorem}\label{thm:main2}  (The uniqueness  of the Fisher metric) Let $\{ \Om\} $ be  the class  of  separable metrizable   topological spaces provided with Borel $\sigma$-algebra. Assume  that $ F$  is a continuous local statistical non-negative definite quadratic  form  defined  on   all  2-integrable   statistical    models  $(M, \Om, \mu, p)$ (Definitions \ref{def:gen}, \ref{def:loc} )  where $\Om \in \{ \Om \}$.
 If  $F$  satisfies the  monotonicity condition (1) in Definition 1.1 and the  associated     quadratic form $\tilde F$  on $\Mm(\Om)$ (Definition \ref{def:loc}) is strongly continuous for all $\Om$, then    $F$   is the  Fisher quadratic form  up to  a multiplicative constant.
\end{theorem}

\begin{corollary}\label{cor:comp2} Let $\{ \Om\} $ be  the class  of  separable metrizable  topological spaces provided with Borel $\sigma$-algebra. 
Any    continuous local statistical non-negative definite quadratic  form $F$ on  statistical models associated with  $\{ \Om\}$ that satisfies  the condition (1) in Definition \ref{def:information}      also satisfies the condition (2) in Definition \ref{def:information} , if the associated  form $\tilde F$ on $\Mm(\Om)$  satisfies  the strong  continuity condition   for all $\Om$.
In other words,  the  combination  of  the condition (1)  and the strong continuity condition is stronger  than the   condition (2)  for  those  $F$. 
\end{corollary}


In Remark \ref{rem:positive}   below we    argue how   we   consider Theorem  \ref{thm:main2}       as a generalization of   the     characterization 
 the Fisher  metric  by its monotonicity  in the   case of finite sample  spaces, which is equivalent to the  Chentsov  theorem.  Since there are
many measure classes which are invariant under  statistics, see  e.g. \cite[Chapter 9]{Bogachev2007} for   discussion, we conjecture that  without the strong continuity assumption  there exists a  local     statistical  continuous metric  that satisfies (1)  but does not satisfy (2).

 
The remainder of our paper is   organized as follows.  In section \ref{sec:par} we   recall the notion  of  a $k$-integrable parametrized measure model  and the notion of 
a local statistical continuous covariant  tensor  field that  have been introduced by Ay-Jost-L\^e-Schwachh\"ofer in \cite{AJLS2012}. In section \ref{sec:mon} we  prove Theorem \ref{thm:main1}. 
In section \ref{sec:strong}  we   assume that $\Om$ is a separable metrizable topological space provided with  Borel $\sigma$-algebra.
We introduce  a mixed topology  on the space $\Ll^n_n(\Om) : = \cup_{\mu \in \Mm(\Om)}\oplus^n L^n(\Om, \mu)$, which enjoys   nice  properties (Proposition \ref{prop:top}). Using this topology we  introduce the notion  of strongly continuous covariant $n$-tensors  on $\Mm(\Om)$ (Definition \ref{def:strong}). 
In section \ref{sec:uni} we prove  Theorem \ref{thm:main2} by   deriving it  from the special case   associated  with  finite sample spaces.
Finally we include  an appendix containing  a  note on the Chentsov uniqueness theorem. 

The idea to derive   the uniqueness  of the Fisher metric  from its  special case     proved by Chentsov for  finite sample spaces     has been proposed by Amari and Nagaoka \cite[p. 39]{AN2000} as follows ``Here  we shall only observe that Chentsov's theorem  leads to the Fisher metric  and the $\alpha$-connections if a kind of limiting procedure is permitted", see  also Remark \ref{rem:positive} (3) on  a similar idea  due to Morozova-Chentsov.  In this  note  we  have found    such  limiting  procedure in  terms of strong continuity associated with the  mixed   topology. 


\section{$k$-integrable parametrized  measure  models and local statistical continuous tensor  fields}\label{sec:par}

For $\mu_0 \in \Mm(\Om)$  denote by 
$$\Mm_+(\Om, \mu_0) : = \{ \mu = \phi \mu_0|\, \phi \in L^1(\Om, \mu_0), \, \phi > 0 , \, \mu_0\text{-}a.e.\}, $$
$$ \Pp_+(\Om, \mu_0) =  \{\mu \in  \Mm_{+}(\Om, \mu_0) \; : \; \mu(\Omega)  = 1\}. $$


\begin{definition}\label{def:gen} (\cite[Definition 2.4]{AJLS2012}, cf. \cite[\S 2 , p. 25]{Amari1987}, \cite[\S 2.1]{AN2000}) Let $k \ge 1$.
A {\it $k$-integrable parametrized measure model } 
is a quadruple $(M, \Om, \mu, p)$ consisting  of a smooth (finite or infinite dimensional) Banach manifold 
$M$ and a continuous map $ p: M \to \Mm_+(\Om, \mu)$ provided with the $L^1$-topology such that  there exists a density potential $\bar p = \frac{dp}{d\mu}: M \times \Om \to \R$ satisfying $p(x) = \bar p(x,\om) d\mu(\om)$  and the following  conditions:
\begin{enumerate}
\item the function 
$x \mapsto \ln \bar p (x, \om) = \ln\frac{dp(x)}{d\mu}(\om): M \to \R$ is defined and continuously G\^ateaux-differentiable for $\mu$-almost all $\omega \in \Om$, 
\item   for all continuous vector field $V$ on $M$ the function $\omega \mapsto \p _{V} \ln \bar p(x, \om)$ belongs to $L^k(\Om, p(x))$;
moreover, the function $x \mapsto ||\p _{V} \ln \bar p(x, \om)|| _{L^k (\Om, p(x))}$  is continuous on $M$.
\end{enumerate}
We call $M$ the {\it parameter space} of $(M, \Om, \mu, p)$. 
We call  $(M, \Om, \mu, p)$ {\it  a statistical  model} if 
$p(M)\subset \Pp_+ (\Om, \mu)$.
\end{definition}

In Definition \ref{def:gen} the continuous G\^ateaux-differentiability of $\ln \bar p(x, \om)$  in $x \in M$ means the  continuity  of the  Gateaux-differential $D\ln \bar p(x, \om)$ as a function on $TM$ \cite[chapter I.3]{Hamilton1982}.
\begin{remark}\label{rem:log} In Definition \ref{def:gen}  we  represent  a  tangent vector $V \in T_xM$  by the function  $\p_V \ln \bar p (x, \om) \in L^1(\Om, p(x))$. This representation   is independent  of the choice  of a  reference  measure in $\Mm_+ (\Om,\mu)$, it depends only on the map $p : M \to  \Mm_+(\Om, \mu)$.
\end{remark}

\begin{definition}\label{def:new}(\cite[Definition 2.2]{AJLS2012}) 
A section  $\tau$  of the  bundle $T^*M \otimes _{n\,  times}\otimes  T^* M$  is  called {\it a  weakly continuous   covariant $n$-tensor},  if  
the value $\tau (V_n)$  is a continuous function for  any  continuous    $n$-vector field
$V_n$ on   $M$.
\end{definition}

\begin{definition} \label{df:tensor} (\cite[Definition 2.1]{AJLS2012})
A {\em covariant $n$-tensor  field }on $\Mm(\Om)$  assigns  
to each $\mu \in \Mm(\Om)$  a multilinear map $\tau_\mu: \bigoplus^n
L^n (\Om, \mu) \to \R$ that is continuous w.r.t. the product topology on $\bigoplus^n L^n (\Om, \mu)$.
\end{definition}

\begin{definition}[Locality and continuity condition] \label{def:loc}(\cite[Definition 2.8]{AJLS2012}) 
Given a class  $\{ \Om\}$ of  measure spaces, {\it a statistical} covariant continuous $n$-tensor field  $A$      
assigns  to  each  parametrized measure model $(M, \Om, \mu, p)$ where $\Om \in \{ \Om \}$
a   {\it weakly continuous} (in the sense of Definition \ref{def:new})  covariant $n$-tensor field $A|_{(M, \Om, \mu, p)}$ on $M$   (cf. Definition  \ref{df:tensor}).   
A statistical covariant   continuous  $n$-tensor field  $A$  is called {\it  local}  if  
there is  a  covariant $n$-tensor field
$\tilde A$ on $\Mm(\Om)$ with the following property  for any   parametrized measure model $(M, \Om, \mu, p)$  
and any $V _i \in T_xM$
\begin{equation}
A|_{(M, \Om, \mu, p)} (V_1, \cdots, V_n) = \tilde A_{p(x)}(\p _{V_1} \ln \bar p(x),  \cdots,\p_{V_n}\ln \bar p(x)) .\label{for:local}
\end{equation}
\end{definition}

From now on,  if a weakly continuous covariant  tensor  $A $ on  a $k$-integrable statistical model $(M, \Om, \mu, p)$   satisfies  (\ref{for:local})  for  $A|_{(M, \Om, \mu, p)} = A$, we shall write $A = p^*(\tilde A)$.

\begin{example}\label{ex:fish} (cf. Remark \ref{rem:fish}).  In  \cite{AJLS2012} Ay-Jost-L\^e-Schwachh\"ofer showed that
 {\it the Fisher quadratic form }
\begin{equation}
   g^F(V, W) _x  : = \int _\Om \p_V   \ln \bar p(x, \om) \p_W
    \ln\bar p(x, \om) \, dp(x)\label{for:fisher}
\end{equation} 
and   {\it the Amari-Chentsov  3-symmetric tensor}  
\begin{equation} 
   T^{AC} (V,W, X) _x : = \int _\Om \p_V  \ln\bar p(x, \om)
   \p_W  \ln\bar p (x, \om)\p_X  \ln\bar p(x, \om) \ d p(x)\label{for:amari}
\end{equation}
are local statistical continuous   covariant  tensor  fields.
\end{example}

\section{The monotonicity  of the Fisher metric}\label{sec:mon}

In this  section   we consider  topological spaces $\Om$  provided  with Borel $\sigma$-algebra.   We prove Theorem \ref{thm:main1}, 
and discuss   some related problems (Remark \ref{rem:suff}).

Recall that   a statistic  $\kappa: \Om_1 \to \Om_2$  induces the  linear operator $\kappa_*: L^1 (\Om_1, \mu_1) \to L^1 (\Om_2, \kappa_*(\mu_1))$    defined by \cite[(3.2)]{AJLS2012}
\begin{equation}
\kappa_*f (y) : = \frac{d\kappa_*(f \cdot  \mu_1)}{d\kappa_*(\mu_1)}(y) \label{eq:kappa}
\end{equation}
for $f \in L^1(\Om_1, \mu_1)$ and $y \in \Om_2$.

\begin{remark}\label{rem:meas} The  operator $\kappa_*$ is well defined,  since by the Radon-Nikodym theorem, $ f \in L^1 (\Om_1, \mu_1)$ if and only  if $f\cdot \mu_1$ is a measure  dominated by $\mu_1$, i.e. the null set  of $\mu_1$  is also a null set  of $f\cdot \mu_1$. Now  assume that $Z\subset \Om_2$ is a null set    of $\kappa_*(\mu_1)$. Then
$\kappa^{-1} (Z)$ is  also a null set  of $\mu$ and hence  of $f\cdot\mu_1$. It follows that $Z$ is a null set of $\kappa_*(f\cdot\mu_1)$, and by the Radon-Nykodym  theorem
$\kappa_*(f\cdot \mu_1)$ is dominated  by $\kappa_*(\mu_1)$.
\end{remark}

Some time  we  will write $\kappa^{\mu_1}_*(f)$,  if  $f$ may belong to $L^p(\Om_1,\mu_1)$  for different  $\mu_1$.


The following Lemma  \ref{prop:genf}   is an expression of the well-known fact that  condition expectation reduces   the $L^p$-norm, see  e.g. \cite[\S 4.3]{Neveu}. 

\begin{lemma}\label{prop:genf}For all $p\ge 2$ we  have $\kappa_*(L^ p(\Om_1, \mu_1)) \subset  L^p (\Om_2, \kappa_*(\mu_1))$.  The linear   map $\kappa_*$  contracts $L^p$-norm:  
$$||\kappa_*(f)||_{L^p(\Om_2, \kappa_*(\mu_1))} \le ||f||_{L^p(\Om_1, \mu_1)}$$
for all $f \in L^p(\Om_1, \mu_1)$.  
\end{lemma}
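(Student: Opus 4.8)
The plan is to deduce the estimate from the duality description of the $L^p$-norm together with the change-of-variables formula for push-forward measures; this simply makes explicit, in the present setting, the classical fact quoted before the statement that a conditional expectation operator contracts every $L^p$-norm.

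First I would record the identity underlying the definition \eqref{eq:kappa}: for every bounded measurable function $h$ on $\Om_2$ and every $f\in L^1(\Om_1,\mu_1)$,
\[
\int_{\Om_2} h\cdot\kappa_*(f)\,d\kappa_*(\mu_1)=\int_{\Om_2} h\,d\bigl(\kappa_*(f\cdot\mu_1)\bigr)=\int_{\Om_1}(h\circ\kappa)\,f\,d\mu_1 ,
\]
where the first equality is the defining property of the Radon--Nikodym derivative $\kappa_*(f)$ (well-defined by Remark \ref{rem:meas}) and the second is the change-of-variables formula for the push-forward of the finite signed measure $f\cdot\mu_1$ (decompose $f=f^+-f^-$). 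In the same way, for any measurable $h\ge 0$ on $\Om_2$ and any exponent $q\ge 1$ one has $\int_{\Om_1}|h\circ\kappa|^q\,d\mu_1=\int_{\Om_2}|h|^q\,d\kappa_*(\mu_1)$, i.e.\ $\|h\circ\kappa\|_{L^q(\Om_1,\mu_1)}=\|h\|_{L^q(\Om_2,\kappa_*(\mu_1))}$.

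Now fix $p\ge 2$, let $q\in(1,2]$ be the conjugate exponent, and take $f\in L^p(\Om_1,\mu_1)$; this lies in $L^1(\Om_1,\mu_1)$ since $\mu_1$ is finite, so $\kappa_*(f)$ is defined and belongs to $L^1(\Om_2,\kappa_*(\mu_1))$ by Remark \ref{rem:meas}. For every simple function $h$ on $\Om_2$ with $\|h\|_{L^q(\kappa_*(\mu_1))}\le 1$, the displayed identity and H\"older's inequality give
\[
\left|\int_{\Om_2} h\cdot\kappa_*(f)\,d\kappa_*(\mu_1)\right|\le\|h\circ\kappa\|_{L^q(\mu_1)}\,\|f\|_{L^p(\mu_1)}\le\|f\|_{L^p(\mu_1)} .
\]
Testing against $h=\mathrm{sgn}\bigl(\kappa_*(f)\bigr)\,g_n^{\,p-1}/\|g_n\|_{L^p}^{\,p-1}$, where $g_n\uparrow|\kappa_*(f)|$ is an increasing sequence of nonnegative simple functions, and letting $n\to\infty$ via monotone convergence, the converse H\"older inequality then yields at once that $\kappa_*(f)\in L^p(\Om_2,\kappa_*(\mu_1))$ and that $\|\kappa_*(f)\|_{L^p(\kappa_*(\mu_1))}\le\|f\|_{L^p(\mu_1)}$, which is the assertion. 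I do not anticipate a genuine obstacle: the only slightly delicate points are the change-of-variables identity for the signed measure $f\cdot\mu_1$ and the simultaneous extraction, through the converse H\"older inequality, of the $L^p$-membership of $\kappa_*(f)$ and the norm bound, rather than presupposing $\kappa_*(f)\in L^p$. Conceptually, $\kappa_*(f)\circ\kappa$ is the conditional expectation of $f$ with respect to the $\sigma$-algebra generated by $\kappa$, and the bound is Jensen's inequality applied to the convex function $t\mapsto|t|^p$; the duality argument above has the mild advantage of requiring no regularity hypothesis on $\Om_1$ and $\Om_2$.
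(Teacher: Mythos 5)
Your argument is correct. Note, though, that the paper does not prove the lemma at all in this version: it simply observes that the statement is a special case of the classical fact that conditional expectation contracts every $L^p$-norm (citing Neveu, \S 4.3), i.e.\ it identifies $\kappa_*(f)\circ\kappa$ with the conditional expectation of $f$ with respect to the $\sigma$-algebra $\kappa^{-1}(\Sigma_2)$ and invokes Jensen's inequality, relegating a direct proof to the arXiv version. Your route is genuinely different in presentation: you work entirely with the defining Radon--Nikodym identity $\int_{\Om_2} h\,\kappa_*(f)\,d\kappa_*(\mu_1)=\int_{\Om_1}(h\circ\kappa)f\,d\mu_1$, the isometry $\|h\circ\kappa\|_{L^q(\mu_1)}=\|h\|_{L^q(\kappa_*(\mu_1))}$, and the converse H\"older inequality tested on simple functions $g_n\uparrow|\kappa_*(f)|$, which yields membership in $L^p$ and the bound simultaneously. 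The steps all check out: $\mathrm{sgn}(\kappa_*(f))$ is itself simple, the finiteness of $\mu_1$ guarantees $L^p\subset L^1$ so that $\kappa_*(f)$ is defined, and monotone convergence closes the argument. What each approach buys: the paper's citation is shortest and places the lemma in its natural probabilistic context (which the paper exploits conceptually elsewhere), but it tacitly requires normalizing the finite measure to a probability measure and setting up conditional expectations; your duality argument is self-contained, needs no regularity on $\Om_1,\Om_2$ and no normalization, and makes explicit exactly which properties of $\kappa_*$ are used -- a reasonable trade of brevity for transparency.
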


\begin{proof} Let $f \in L^p(\Om_1, \mu_1)$  and $y \in \Om_2$.  For a sequence  of  open sets $\Om_2= A_0 \supset \cdots \supset  A_n \supset \cdots  \ni y$ and a statistic $\kappa: \Om_1 \to \Om_2$ we set
$$ |f|_n (y): = \frac{ \int_{\kappa^{-1} (A_n)}|f (x)| \, d\mu_1} {\mu_1 (\kappa ^{-1}(A_n))}.$$
By the H\"older inequality, we have
$$(|f|_n(y)) ^p  \le \frac{\int_{\kappa^{-1} (A_n)}|f (x)^p| \, d\mu_1}{\mu_1 (\kappa ^{-1}(A_n))}.$$
 Since $\lim_{n \to \infty} |f|_n (y ) = \kappa_*(|f|)(y)$ we  deduce from the above inequality
\begin{equation}
(\kappa_*(|f|)(y))^p \le \kappa_* (|f | ^ p)(y).\label{eq:in1a}
\end{equation}
Using (\ref{eq:in1a}), we obtain
\begin{equation}
||\kappa_*(f)||_{L^p(\Om_2, \kappa_*(\mu_1))}^p  = \int_{\Om_2} |\kappa_*(f)|^p d\kappa_*(\mu_1) \nonumber 
\end{equation}
\begin{equation}
\le\int_{\Om_2} (\kappa_*(|f|))^p d\kappa_*(\mu_1) \le  \int_{\Om_2} \kappa_* (|f | ^ p)d\kappa_*(\mu_1)=  (||f||_{L^p(\Om_1, \mu_1)})^p, \nonumber 
\end{equation}
which implies immediately  Lemma \ref{prop:genf}.  
\end{proof}

{\it Proof  of Theorem \ref{thm:main1}}.  
By Remark \ref{rem:log}  the geometry  of a  parametrized measure model $(M, \Om_1, \mu_1, p_1)$ does not depend on the choice of a reference measure $\mu_1$. Thus, 
to prove Theorem \ref{thm:main1}   at a point $x \in M$, 
 we can assume that $p_1(x) = \mu_1$ and hence $\bar p_1 (x, \om) = 1$.  
Abusing the notation,  for   a  function  $\bar p : M \times \Om \to \R$  and for $x\in  M$, we  denote by $\bar p (x)$   the function $\Om \to \R$ such that
$\bar  p (x) (\om)  = \bar p (x, \om)$.  Then we  have   $ \kappa_*^{\mu_1}(\bar p_1(x))(\kappa(\om))  =1$ for  all $\om$.   Now let $V \in T_xM$.  Then  we have
\begin{equation}
\p_V (\ln \kappa_*^{\mu_1}(\bar p_1 (x))  = \p_V \kappa_*^{\mu_1}( \bar p_1 (x)).\label{eq:pv1}
\end{equation}
Next, we shall prove the  following   equality
\begin{equation}
 \p_V \kappa_*^{\mu_1}(\bar p_1  (x)) = \kappa_*^{\mu_1}(\p_V \bar p_1 (x)). \label{eq:pv2}
\end{equation}
To prove   (\ref{eq:pv2}) it suffices  to     show   that  the following equality holds
\begin{equation}
\p_V \kappa_*^{\mu_1} (p_1 (x)) = \kappa_*^{\mu_1} (\p _V  p_1 (x)) \label{eq:pv3}
\end{equation}
where  the RHS and LHS   of (\ref{eq:pv3}) are understood as signed measures.

The condition (2) in Definition \ref{def:gen}  implies that  $\p _V \bar p_1 \in L^2 (\Om_1, \mu_1)\subset L^1 (\Om_1, \mu_1)$, since $(M, \Om_1, \mu_1, p_1)$ is  a 2-integrable  parametrized  measure model.
Hence, for any measurable subset $A $ in $\Om_2$,  we can  apply the     differentiation under integral  (see e.g. \cite[Theorem  16.11, p. 213]{Jost2005})  to obtain  the following
$$\p _V  \int _{\kappa^{-1}(A)} \bar p_1 d\mu_1   = \int_{\kappa^{-1}(A)}\p_V \bar p_1 d\mu_1.$$
This  equality  implies  (\ref{eq:pv3})    immediately. Hence (\ref{eq:pv2}) holds.

Let us continue  the proof of  Theorem \ref{thm:main1}.  Using (\ref{eq:pv2}),  and recalling that $\bar p_1 (x, \om) = 1$, we obtain from (\ref{eq:pv1})
$$\p_V (\ln \kappa_*^{\mu_1}(\bar p_1 (x))  = \kappa_*^{\mu_1}(\p_V \bar p_1 (x))= \kappa_*^{\mu_1} (\p_V \ln \bar p_1)(x). $$
 Since $\p_V \ln \bar p_1(x) \in L^2(\Om_1, p_1(x))$  for all $x \in M$, by Lemma \ref{prop:genf}, we obtain 
\begin{equation}
||\kappa_*^{\mu_1}(\p_V \ln \bar p_1)(x)||_{L^2 (\Om_2, \kappa_*^{\mu_1}(p_1(x))} \le ||\p_V \ln \bar p_1(x)||_{L^2(\Om_1, p_1(x))}.\label{eq:mono}
\end{equation}
Noting that the LHS of (\ref{eq:mono})  is   equal  to $g^F_{(M, \Om_1, \mu_1, p_1)}$ and  the RHS of (\ref{eq:mono}) is equal to  $g^F_{(M, \Om_2, \kappa_*(\mu_1), \kappa_*(p_1))}$, we deduce  Theorem  \ref{thm:main1}  from (\ref{eq:mono}).

\begin{remark}\label{rem:suff} 1. It is not hard to see that if  $\Om_1$, $\Om_2$ are metric topological spaces, $\kappa$ and $f$  are continuous, then
the inequality (\ref{eq:mono}) becomes  an    equality   if and only if $f(\om) = \kappa_*(f) (\kappa(\om))$ for all $\om$.  

2.  Proposition \ref{prop:genf} implies  that  the  absolute value $\hat T^{AC}$  of the Amari-Chentsov tensor  defined by
 $\hat T^{AC}(V) : = |A^{TC} (V, V, V)| $  for $V \in  TM$
also satisfies      the version of  Definition \ref{def:information} on  statistical fields  which measure ``information loss".
\end{remark}

\section{Mixed topology and strongly continuous  covariant  tensor fields}\label{sec:strong}

In this section  we   assume that $\Om$ is a separable  metric topological space provided with  Borel $\sigma$-algebra.  Let $\R^n_{\ge 0}: = [0, \infty) ^n$. 
We introduce   a mixed topology on the spaces $\Ll^n_n(\Om) : = \cup_{\mu \in \Mm(\Om)}\oplus^n L^n(\Om, \mu)$  and $\Ll^n_1(\Om) : = \cup_{\mu \in \Mm(\Om)} L^n(\Om, \mu)$, which has  good properties (Proposition \ref{prop:top}). Using the mixed topology, we  introduce the notion  of strongly continuous covariant $n$-tensor fields  on $\Mm(\Om)$ (Definition \ref{def:strong}), whose examples are the Fisher quadratic form (Remark \ref{rem:fish})  and all  continuous functions on $\Ll^k_k(\Om_n)$ (Example \ref{exam:strong}), where $\Om_n$ is  a finite sample space consisting of  $n$ elementary events.  

\subsection{Mixed topology on $\Ll^n_n(\Om)$}\label{subs:mixed}
It  is known that  $\Mm(\Om)$ possesses  many different important topologies, e.g. the  total variation  topology, the strong topology and the weak topology. The  total variation is used in Definition \ref{def:gen}.  Now we recall the  notion  of  weak topology  on $\Mm(\Om)$, which plays  prominent  role in measure theory and  especially in  probability theory \cite{Bogachev2007, Bil1999}.   
Denote  by $C_b (\Om)$  the space of all  bounded  continuous real functions   on $\Om$. 

\begin{definition}\label{def:top} (cf. \cite[Definition 8.1.1, vol. II]{Bogachev2007})  
A sequence  of  Borel  measures $\mu_\alpha$  on $\Om$ is called {\it weakly convergent to}   a  Borel measure $\mu$  (writing  as $\mu_\alpha \LRA \mu$) if  for every  $f\in C_b(\Om)$ one has
$$\lim_\alpha  \int_\Om f \,d\mu _\alpha  = \int _\Om f d\mu.$$
\end{definition}

It is known that the weak topology on $\Mm(\Om)$ is generated by
fundamental neighborhoods  of $\mu$, $\mu \in \Mm(\Om)$, defined as follows \cite[Definition 8.1.2]{Bogachev2007}
\begin{equation}
U_{f_1,\cdots, f_k,\eps}(\mu) := \{ \nu: \,| \int_\Om f_i d\mu -\int_\Om f_i d\nu | < \eps \text{ for } i\in[1,k]\}, \label{eq:weak}
\end{equation}
where $f_i\in C_b(\Om)$, $k \in \N$ and $\eps > 0$. 


\begin{remark}\label{rem:cbd} 1. The weak topology  on $\Mm(\Om)$ is weaker than the total variation topology,  hence for any $k$-integrable  parametrized measure model  $(M, \Om, \mu, p)$ the  embedding $p: M \to \Mm_+ (\Om, \mu) \to  \Mm(\Om)$  is also  continuous with respect to the weak topology on $\Mm(\Om)$.

2. Since $\Om$ is a separable metric  topological space, for each $\mu \in \Mm(\Om)$  the subspace $ C_b(\Om)$ is a dense  subset  in $ L^n (\Om, \mu)$   with respect to the $L^n(\Om, \mu)$-topology  \cite{Adams2006, Jost2005, Bogachev2007}.
\end{remark}

Let us denote by $\Ll^n_n(\Om)$  the fibration  over $\Mm(\Om)$  whose fiber  over  $\mu\in \Mm(\Om)$ is the space $\oplus ^n L^n(\Om, \mu)$. Note that the   product topology on
$\oplus ^n L^n (\Om, \mu)$ is generated by the product norm  defined as follows. For $\vec{f} = (f_1, \cdots, f_n ) \in \oplus ^n L^n(\Om, \mu)$ let
$$||\vec{f}||_{L^n_n (\mu)} : = \sum_{i=1} ^n ||f_i||_{L^n  (\Om, \mu)}.$$
 Denote by
$\pi$ the  projection $\Ll^n_n(\Om)\to \Mm(\Om)$. 

 We  are going to define  a  topology on $\Ll^n_n(\Om)$ by specifying      its base.
For  an $n$-tuple  of functions $\vec{f} \in  \oplus^n C_b(\Om)= (C_b(\Om))^n$, an open  set $U\subset  \Mm(\Om)$ in the weak topology  and $\eps >0$ we set
\begin{equation}
O(\vec{f},U, \eps): =\{ [\vec{g}, \mu]:\: \mu \in U ,\, \vec{g} \in \oplus ^n L^n(\Om, \mu) \text {  and  }  ||\vec{g} - \vec{f}||_{L^n_n(\mu)}  < \eps\},\label{eq:mix}
\end{equation}
where $[\vec g, \mu]$ means a pair.

 Note that
\begin{equation}
O(\vec{f}, \cup_i U_i, \eps) = \cup_i O(\vec{f}, U_i, \eps) \text{ and } O(\vec{f},\cap _i U_i, \eps) =  \cap _i O(\vec{f}, U_i, \eps). \label{eq:uni}
\end{equation}

\begin{proposition}\label{prop:top}  The collection $B$ of all subsets  $O(\vec{f}, U, \eps)$ where  $\vec{f} \in (C_b(\Om))^n$, $U $ is open set in $\Mm(\Om)$ and
$\eps > 0$  generates a unique  topology   on $\Ll^n_n(\Om)$, which we shall call the mixed  topology. Furthermore, the restriction  of this topology to each  fiber 
$\oplus ^n L^n(\Om, \mu)$  is  equal to  the $L^n(\Om,\mu)$-topology
on the fiber. Consequently,  the space $(C_b(\Om) )^n\times \Mm(\Om)$
is a dense  subset  in the mixed   topology. The projection $\pi: \Ll^n_n(\Om) \to \Mm(\Om)$ is continuous with respect  to the mixed topology on the domain and the weak topology on the target space.
\end{proposition}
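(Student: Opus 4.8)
The plan is to verify that the collection $B$ satisfies the two axioms for a base of a topology and then to read off the three remaining assertions from the explicit shape of the base sets. Two elementary observations carry most of the weight. First, every $f\in C_b(\Om)$ lies in $L^n(\Om,\nu)$ for every $\nu\in\Mm_+(\Om)$, since $\int_\Om|f|^n\,d\nu\le\|f\|_\infty^n\,\nu(\Om)<\infty$; hence the sets $O(\vec f,U,\eps)$ and the ``section'' $(C_b(\Om))^n\times\Mm_+(\Om)$ really are subsets of $\Ll^n_n(\Om)$. Second, if $\psi\in C_b(\Om)$ then $|\psi|^n\in C_b(\Om)$, so by the definition of the weak topology the map $\nu\mapsto\int_\Om|\psi|^n\,d\nu$ is weakly continuous on $\Mm_+(\Om)$; composing with $t\mapsto t^{1/n}$ and summing over coordinates, $\nu\mapsto\|\vec f\|_{L^n_n(\nu)}$ is weakly continuous for every fixed $\vec f\in(C_b(\Om))^n$. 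This continuity in $\nu$ of the $L^n$-norm of a \emph{fixed continuous} function is what lets one compare $L^n$-norms along measures that are only weakly close, and it is the reason the base is built from continuous functions together with weak neighborhoods.

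For the first base axiom, given $[\vec g,\mu]\in\Ll^n_n(\Om)$ I would use the density of $C_b(\Om)$ in $L^n(\Om,\mu)$ (Remark \ref{rem:cbd}) to choose $\vec f\in(C_b(\Om))^n$ with $\|\vec g-\vec f\|_{L^n_n(\mu)}<1$, so that $[\vec g,\mu]\in O(\vec f,\Mm_+(\Om),1)\in B$. The second axiom is the only step that needs care. Suppose $[\vec g,\mu]$ lies in $O(\vec f_1,U_1,\eps_1)\cap O(\vec f_2,U_2,\eps_2)$, and set $\delta_i:=\eps_i-\|\vec g-\vec f_i\|_{L^n_n(\mu)}>0$. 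Choose (again by density) $\vec f_3\in(C_b(\Om))^n$ with $\|\vec g-\vec f_3\|_{L^n_n(\mu)}<\eta$, put $\eps_3:=\tfrac12\min(\delta_1,\delta_2)$, and use the weak continuity of $\nu\mapsto\|\vec f_3-\vec f_i\|_{L^n_n(\nu)}$ to pick weakly open sets $W_i\ni\mu$ on which $\|\vec f_3-\vec f_i\|_{L^n_n(\nu)}<\|\vec f_3-\vec f_i\|_{L^n_n(\mu)}+\tau$. Taking $U_3:=U_1\cap U_2\cap W_1\cap W_2$, two triangle inequalities give, for $[\vec h,\nu]\in O(\vec f_3,U_3,\eps_3)$,
\[
\|\vec h-\vec f_i\|_{L^n_n(\nu)}<\eps_3+\eta+\tau+\|\vec g-\vec f_i\|_{L^n_n(\mu)} ;
\]
choosing $\eta,\tau$ with $\eta+\tau<\eps_3$ makes the right-hand side $<2\eps_3+\|\vec g-\vec f_i\|_{L^n_n(\mu)}\le\delta_i+\|\vec g-\vec f_i\|_{L^n_n(\mu)}=\eps_i$, so $O(\vec f_3,U_3,\eps_3)\subset O(\vec f_1,U_1,\eps_1)\cap O(\vec f_2,U_2,\eps_2)$, while $\eta<\eps_3$ yields $[\vec g,\mu]\in O(\vec f_3,U_3,\eps_3)$. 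Hence $B$ is a base and generates a unique topology, namely the collection of all unions of its members.

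The three remaining claims are then short. On a fixed fiber, $O(\vec f,U,\eps)\cap\bigl(\oplus^n L^n(\Om,\mu)\bigr)$ is empty if $\mu\notin U$ and is the open $\|\cdot\|_{L^n_n(\mu)}$-ball about $\vec f$ if $\mu\in U$; since $C_b(\Om)$ is dense in $L^n(\Om,\mu)$ the balls with such centers form a base of the usual $L^n(\Om,\mu)$-topology (every $L^n$-ball $B(\vec g,\rho)$ contains a ball $B(\vec f,\rho/2)$ with $\vec f\in(C_b(\Om))^n$ and $\vec g\in B(\vec f,\rho/2)$), and each such ball is $L^n$-open, so the subspace topology on the fiber is exactly the $L^n(\Om,\mu)$-topology. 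Density of $(C_b(\Om))^n\times\Mm_+(\Om)$ is then immediate: any nonempty member $O(\vec f,U,\eps)$ of $B$ has $U\ne\emptyset$, and $[\vec f,\mu]\in O(\vec f,U,\eps)$ for every $\mu\in U$. Finally, for weakly open $U\subset\Mm_+(\Om)$ one has $\pi^{-1}(U)=\bigcup_{\vec f\in(C_b(\Om))^n}O(\vec f,U,1)$ — the inclusion $\supset$ is clear, and $\subset$ again uses the density of $C_b(\Om)$ in each $L^n(\Om,\mu)$ — so $\pi^{-1}(U)$ is a union of members of $B$, hence open, and $\pi$ is continuous.

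I expect the verification of the second base axiom to be the only genuine obstacle; everything else is bookkeeping with the triangle inequality and the density of $C_b(\Om)$ in $L^n(\Om,\mu)$. The delicate point is uniformity — one must keep $\|\vec f_3-\vec f_i\|_{L^n_n(\nu)}$ controlled as $\nu$ ranges over a whole weak neighborhood of $\mu$, not merely at $\mu$ — and it is precisely the weak continuity of the $L^n$-norm of a fixed continuous function that supplies this.
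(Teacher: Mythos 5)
Your proof is correct and follows essentially the same route as the paper: verify the two base axioms, with the key step being control of $\|\vec f_3-\vec f_i\|_{L^n_n(\nu)}$ uniformly over a weak neighborhood of $\mu$ — which the paper achieves by building the functions $|f_3^i-f_1^i|^n,\,|f_3^i-f_2^i|^n$ into the fundamental weak neighborhood $U_3$, exactly your ``weak continuity of the $L^n_n$-norm of a fixed $C_b$ tuple'' packaged as the sets $W_i$. The fiber, density, and projection arguments also coincide with the paper's, so no changes are needed beyond noting that $\eps_3$ is independent of $\eta,\tau$, which makes your order of choices legitimate.
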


\begin{proof}  To prove the first assertion of Proposition \ref{prop:top} it suffices to  show that the following  conditions hold.
\begin{enumerate}
\item The (base) elements  in $B$ cover $\Ll^n_n(\Om)$.
\item Let $O(\vec{f_1}, U_1, \eps_1)$  and  $O(\vec{f_2}, U_2, \eps_2)$ be base elements. If their  intersection  $I$  is non-empty, then for each $[\vec{f}, \mu] \in I$, there is a base element $O(\vec{f_3}, U_3, \eps_3)$ such that $[\vec{f}, \mu]\in O(\vec{f_3}, U_3, \eps_3)\subset I$.
\end{enumerate}

The first condition (1)  holds  by Remark \ref{rem:cbd}.2.

Now let us prove that (2)  holds.  For $\vec{f} \in \oplus ^nL^n (\Om, \mu)$  we set
$$B(\vec{f},\eps, \mu): = \{ \vec{f'} \in \oplus ^nL^n(\Om, \mu) \text{ and } ||  \vec{f'} - \vec{f}||_{L^n_n (\Om, \mu)} < \eps\}.$$ 
Note that  $I \cap \pi^{-1}(\mu)$ is an open subset  of $\pi^{-1}(\mu)$ in $L^n (\Om,\mu)$-topology, since it is the intersection of two open balls $B(\vec{f_1},\eps_1, \mu)$ and $B(\vec{f_2},\eps_2,\mu)$.
Using(\ref{eq:uni}),  we can assume w.l.o.g.
$$U_1=U_{\tilde f_1, \cdots, \tilde f_k, \eps_1 }(\mu_1),$$
$$U_2= U_{\tilde g_1 \cdots, \tilde g_m, \eps_2}(\mu_2).$$
Let $\delta_1$ be a number  such that 
\begin{equation}
U_{\tilde f_1,\cdots , \tilde f_k,\tilde g_1, \cdots, \tilde g_m,\delta_1}(\mu) \subset U_1\cap U_2,\label{eq:inters1}
\end{equation}
and moreover $\delta _1 \le \min\{ 1, \eps_1, \eps_2\}$. Next we choose  a positive number $\delta _2 \le \delta _1$ such that
\begin{equation}
||\vec{f}-\vec{f_1} ||_{L^n_n(\mu)} < \eps_1-\delta_2 \text{ and } ||\vec{f}-\vec{f_2}||_{L^n_n(\mu)} < \eps_2 -\delta_2.\label{eq:delta} 
\end{equation}

Then we   choose $[\vec{f_3}, \mu] \in I\cap  \pi^{-1}(\mu)$ with the following properties
\begin{equation}
\vec{f_3} \in (C_b (\Om))^n  \text{ and }  ||\vec{f_3} - \vec{f}||_{L^n_n(\mu)} < \frac{1}{4} \delta_2. \label{eq:f3}
\end{equation}
We obtain from (\ref{eq:delta}) and (\ref{eq:f3})
\begin{equation}
||\vec{f_3} -\vec{f_i}||_{L^n_n(\mu)} < \eps_i- \frac{3}{4} \delta_2  \text{ for } i =1,2.\label{eq:delta3}
\end{equation}
We write $\vec{f_3} = (f_3^1, \cdots, f_3^n)$.  Note that $|f_3^i - f_1^i|^n$ and $|f_3^i-f_2^i|^n$ are continuous  bounded   functions on $\Om$  for all $ i \in [1,n]$.
Now we set
\begin{equation}
U_3 : =  U_{\tilde f_1, \cdots , \tilde f_k, \tilde g_1, \cdots, \tilde g_m, |f^i_3 - f^i_1|^n,|f^i_3 - f^i_2|^n, i\in [1, n], (\frac{1}{ 8} \delta _2)^n} (\mu).\label{eq:u3}
\end{equation}
Since $\delta_2 \le \delta _1$ we obtain  from  (\ref{eq:u3}) and (\ref{eq:inters1})
$$U_3 \subset U_{\tilde f_1, \cdots, \tilde f_n, \tilde g_1, \cdots, \tilde  g_m, \delta_1} (\mu) \subset U_1 \cap U_2.$$
Clearly, (\ref{eq:f3}) implies that $[\vec{f},\mu] \in O(\vec{f_3}, U_3, \frac{1}{4} \delta_2)$. Hence, setting  $\eps_3: = \frac{1}{ 4} \delta_2$,  to complete  the proof  
of the first assertion of Proposition \ref{prop:top}, it suffices to show that
\begin{equation}
O(\vec{f_3}, U_3, \frac{1}{ 4} \delta _2) \subset I.\label{eq:subset}
\end{equation}
Let $[\vec{h}, \mu'] \in O(\vec{f_3}, U_3, \frac{1}{ 4} \delta_2)$. To prove (\ref{eq:subset}) we need to show that $[\vec{h}, \mu'] \in I$,  or equivalently
\begin{equation}
[\vec{h}, \mu'] \in O(\vec{f_i}, U_i, \eps_i) \text{ for } i = 1, 2. \label{eq:subset2}
\end{equation}
Since $\mu'\in U_3 \subset U_i$ for $i = 1,2$, (\ref{eq:subset2})  is equivalent to
\begin{equation}
||\vec{h} -\vec{f_i}||_{L^n_n(\mu')} < \eps_i  \text{ for } i = 1,2.\label{eq:dist}
\end{equation}
Taking into account $[\vec{h}, \mu'] \in O(\vec{f_3}, U_3, \frac{1}{ 4} \delta_2)$,   we obtain
\begin{equation}
||\vec{h}-\vec{f_3}||_{L^n_n(\mu')} < {1\over 4} \delta_2.\label{eq:in1}
\end{equation}
Since $\mu'\in U_3$,  we derive from  (\ref{eq:delta3})  and (\ref{eq:u3})
\begin{equation}
||\vec{f_3} -\vec{f_1}||_{L^n_n(\mu')} <||\vec{f_3} -\vec{f_1}||_{L^n_n(\mu)} +{1\over 8} \delta_2 < \eps_1 -\frac{5}{ 8} \delta_2.\label{eq:in2}
\end{equation}
In the same way we obtain
\begin{equation}
||\vec{f_3} -\vec{f_2}||_{L^n_n(\mu')} < \eps_2 -\frac{5}{ 8} \delta_2. \label{eq:in3}
\end{equation}
Clearly, (\ref{eq:in1}), (\ref{eq:in2}),  and (\ref{eq:in3}) imply (\ref{eq:dist}). This proves the first assertion of  Proposition \ref{prop:top}.

The second  assertion  of Proposition \ref{prop:top}  follows  from Remark \ref{rem:cbd}.2, observing that a ball $B(\vec{f}, \eps, \mu) $
is  the intersection of  the open set $O(\vec{f}, U(\mu), \eps)$ with the  fiber $\oplus^n L^n(\Om, \mu)$. 
  
 Finally, the last  assertion is obvious, since the  preimage $\pi^{-1} (U)$ of  an open set $U \subset \Mm(\Om)$ is the union of all
 open sets  of the form $O(\vec{f}, U, \eps)$, $f \in (C_b(\Om))^n$ and $\eps > 0$. This completes the proof of Proposition \ref{prop:top}.
\end{proof}

\subsection{Strongly continuous   covariant $n$-tensor on $\Mm(\Om)$}\label{subs:scont}

\begin{definition}\label{def:strong} A covariant  $n$-tensor field on $\Mm(\Om)$ is called  {\it strongly continuous}, if it  is a continuous  function on $\Ll^n_n(\Om)$ with respect  to the  mixed   topology.
\end{definition}

\begin{example}\label{exam:strong}  Let $\Om_n: = \{ \om_1, \cdots, \om_n\}$ be a    finite   sample space of $n$ elementary events. Let $\delta _{\om_i}$ denote the Dirac measure concentrated at $\om_i$.
Let  $\mu_l = \sum _{ i=1} ^l c_i \delta _{\om_i} \in \Mm (\Om_n)$, where $l \le n$ and $c_i > 0$. Then, for all $k \ge 1$, $L^k(\Om_n, \mu_l)$ is homeomorphic to $C_b(\Om_l) = \R^l$, which is
provided with  the usual  (vector space) topology.  Furthermore,  the weak topology on  $\Mm(\Om_n)= \R^n_{\ge 0}$   coincides with the usual topology on $\R^n_{\ge 0}\subset \R^n$.  Hence the    subset $\Mm_+ (\Om_n)$ consisting  of  positive measures on $\Om_n$  is dense  in $\Mm(\Om_n)$. We observe that  $\pi: \Ll_k ^k (\Om_n) \to \Mm(\Om_n)$ is a fiber  bundle whose
fiber over  $\mu_l$  is homeomorphic to $(\R ^l) ^k$.   A covariant $k$-tensor
field $\tilde F$ on $\Mm(\Om_n) = \R^ n _{\ge 0}$  is a continuous  function  on $\Ll_k ^k (\Om_n)$. 
Since   $\pi^{-1} (\Mm_+ (\Om_n))$  is open and dense  in $\Ll _k ^ k (\Om_n)$,    the  function  $\tilde F$ is defined  uniquely by its restriction  to $\pi^{-1} (\Mm_+(\Om_n))$.  In particular,  the Fisher metric  defined  on $\Mm_+(\Om_n)$ is associated with   the quadratic form
$\tilde  g^F : \Ll_2^2(\Om_n)\to \R$  defined by $\tilde  g^F ([f_1, f_2, \mu]) = \int_{\Om_n} f_1 \cdot f_2\, d \mu$, see also Remark \ref{rem:fish}. 
\end{example}

\begin{proposition}\label{prop:fisher}   
Let $g\in C_b (\Om)$  and $ c: \Mm(\Om) \to \R$  be a continuous function  with respect to the weak topology. We define  a   covariant $n$-tensor field $T_{(g,c)}$ on $\Mm(\Om)$ by  setting
$$T_{g, c} ([  f_1, \cdots, f_n, \mu]) :=  c(\mu) \cdot \int_{\Om} g \cdot f_1\cdots  f_n\,d\mu.$$
Then $T_{g,c}$  is a  strongly  continuous covariant  $n$-tensor  field on $\Mm(\Om)$. 
\end{proposition}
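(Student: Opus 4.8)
The plan is to verify continuity of $T_{g,c}$ at an arbitrary point $[\vec f, \mu]\in\Ll^n_n(\Om)$ directly from the definition of the mixed topology, by producing, for each $\eps>0$, a basic open neighborhood $O(\vec h, U, \delta)$ of $[\vec f, \mu]$ on which $|T_{g,c}-T_{g,c}([\vec f,\mu])|<\eps$. Since the base $B$ of Proposition \ref{prop:top} consists of sets $O(\vec h, U, \delta)$ with $\vec h\in(C_b(\Om))^n$, and since $(C_b(\Om))^n\times\Mm_+(\Om)$ is dense in the mixed topology (Proposition \ref{prop:top}), I may and will choose the "center" $\vec h$ of the neighborhood to have continuous bounded components; this is what makes the estimates go through. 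First I would fix such an $\vec h=(h_1,\dots,h_n)\in(C_b(\Om))^n$ with $\|\vec h-\vec f\|_{L^n_n(\mu)}$ as small as needed (say $<\eta$ for a parameter $\eta$ to be chosen in terms of $\eps$).

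Next I would split $T_{g,c}([\vec k,\mu'])-T_{g,c}([\vec f,\mu])$ into two groups of terms. The first group controls the change in the integrand when the vectors move: writing the multilinear identity $k_1\cdots k_n - f_1\cdots f_n = \sum_{j=1}^n k_1\cdots k_{j-1}(k_j-f_j)f_{j+1}\cdots f_n$, and applying the generalized H\"older inequality with exponent $n$ in each of the $n$ factors, one bounds $\big|\int_\Om g\,(k_1\cdots k_n-f_1\cdots f_n)\,d\mu'\big|$ by $\|g\|_\infty$ times a sum of products of $L^n(\Om,\mu')$-norms, one of which is $\|k_j-f_j\|_{L^n(\mu')}$ and the rest of which are $\|k_i\|_{L^n(\mu')}$ or $\|f_i\|_{L^n(\mu')}$. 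The second group controls the change in the measure: the quantity $c(\mu')\int_\Om g\,f_1\cdots f_n\,d\mu' - c(\mu)\int_\Om g\,f_1\cdots f_n\,d\mu$ must be made small. Here is the point where the choice $\vec h\in(C_b(\Om))^n$ pays off: after replacing each $f_i$ by $h_i$ (absorbing the error into the first group via the $L^n$-closeness of $\vec h$ and $\vec f$ at $\mu$ — though note $\mu'$ appears, so one passes through $\mu$ using that $\mu'\in U$ and $U$ controls the relevant bounded continuous functions $|h_i-f_i|^n$, exactly as in the proof of Proposition \ref{prop:top}), the product $g\cdot h_1\cdots h_n$ lies in $C_b(\Om)$, so weak convergence of measures gives $\int g h_1\cdots h_n\,d\mu'\to\int g h_1\cdots h_n\,d\mu$, and continuity of $c$ in the weak topology gives $c(\mu')\to c(\mu)$; both are then guaranteed on a suitable weak neighborhood $U$ of $\mu$, which also serves to bound $\int g h_1\cdots h_n\,d\mu'$ uniformly.

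Assembling these, I would first choose the basepoint approximant $\vec h$ and then, working backwards, choose $U$ (a basic weak neighborhood of $\mu$ cut out by the finitely many functions $g\,h_1\cdots h_n$, the $|h_i-f_i|^n$, and the constant $1$, with a small tolerance) and finally $\delta>0$, so that on $O(\vec h,U,\delta)$ every term in the decomposition is $<\eps/N$ for an appropriate fixed $N$. The bookkeeping — propagating the $L^n(\mu)$-estimates to $L^n(\mu')$-estimates for $\mu'\in U$ using bounded continuous test functions, exactly the device used in \eqref{eq:in2}--\eqref{eq:in3} — is the only delicate part; each individual inequality is routine H\"older plus weak convergence. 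The main obstacle I anticipate is precisely this transfer of norm bounds from the reference measure $\mu$ to a varying measure $\mu'$ in the weak neighborhood: one cannot directly compare $\|f_i\|_{L^n(\mu')}$ with $\|f_i\|_{L^n(\mu)}$ unless $|f_i|^n$ is (dominated by) a bounded continuous function, which is why the approximation step by elements of $C_b(\Om)$ must be done first and carried through every estimate. Once that pattern is set up, the proof is a finite sum of controlled error terms, and the conclusion that $T_{g,c}$ is continuous on $\Ll^n_n(\Om)$, hence strongly continuous in the sense of Definition \ref{def:strong}, follows. Multilinearity and continuity of $T_{g,c}$ restricted to each fiber $\oplus^n L^n(\Om,\mu)$ is immediate from the same H\"older estimate, so $T_{g,c}$ is indeed a covariant $n$-tensor field on $\Mm_+(\Om)$ in the sense of Definition \ref{df:tensor}.
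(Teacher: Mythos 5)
Your proposal is correct and follows essentially the same route as the paper's proof: approximate the center point by an element of $(C_b(\Om))^n$, control the fiber direction by the multilinear/H\"older estimate (this is exactly the content of Lemma \ref{lem:est1}), and control the measure direction by a weak neighborhood cut out by finitely many test functions built from $g$, the continuous approximants, and $|h_i-f_i|^n$, which is the same transfer device the paper uses in (\ref{eq:u3}) and (\ref{eq:u1}). The only cosmetic difference is that the paper first reduces to $c\equiv 1$ via continuity of $c\circ\pi$, whereas you carry the factor $c(\mu)$ through the estimates.
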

\begin{proof} By  Proposition \ref{prop:top}, $\pi: \Ll^n_n(\Om) \to \Mm(\Om)$  is   a continuous  function,  hence $c(\mu)$ is a continuous function on
$\Ll^n_n(\Om)$. Thus to prove   Proposition  \ref{prop:fisher} it suffices to assume that $c(\mu) = 1$,  i.e.  it suffices to show that $T_{g, 1}$ descends to  a   continuous function on $\Ll^n_n (\Om)$  provided  with the  mixed topology.
Equivalently, we need  to show  that the set 
$$ O(a, b) : = \{ [\vec{f}, \mu] \in  \Ll^n_n (\Om) |\:  a< T_{g, 1} (\vec{f}, \mu) < b \} $$
is an open set in the mixed topology for any  $-\infty < a < b < \infty $.

Let $[\vec{f}, \mu] \in O(a+\eps,b-\eps)$, where $\eps < {1\over  4} (b-a)$.  We  will show that  there is an open set $O(\vec{f_1}, U_1, \delta) \ni  [\vec{f}, \mu]$ such that
\begin{equation}
T_{g,1} (O(\vec{f_1}, U_1, \delta)) \subset  (a, b).\label{eq:cont}
\end{equation}

\begin{lemma}\label{lem:est1}
The restriction of $T_{g,1}$ to each fiber $\oplus ^n L^n (\Om,\mu)$  is continuous in the  product   $L^n(\Om, \mu)$-topology.  Moreover,
if $||\vec{h}-\vec{f}||_{L^n_n (\mu)} \le 1$  then  
 $$|T_{g,1} ([\vec{f}, \mu]) - T^n_{g,1} ([\vec{h}, \mu])| \le  \sup_\Om g(\om) \cdot 2 ^n \cdot ||\vec{h} -\vec{f}||_{L^n _n(\mu) }  \cdot(1+\sum _{i =1}^n \sum_{ k  = 1} ^n  ||f_i||_{L^n (\Om,\mu)}^k). $$
\end{lemma}
\begin{proof}
Write  $\vec{f}-\vec{h} = \vec{a} = (a_1, \cdots, a_n)$.  Expanding $h_1 \cdots  h_n = \Pi_{i =1} ^n (f_i-a_i)$  and using  Holder's inequality, we obtain
\begin{eqnarray}
|T_{g,1}([\vec{f}, \mu]) -T_{g,1}([\vec{h}, \mu])|\le  \nonumber \\
\sup_\Om g(\om) \cdot \sum_{ [1, n] = \{i_1, \cdots, i_k\} \cup \{j_1, \cdots, j _{n -k}\} } \int_\Om| a_{i_1} \cdots  a_{i_k} f_{j_1} \cdots  f_{j_{n-k}}| d\mu\le \nonumber \\
\sup_\Om g(\om)\cdot 2 ^n\cdot \max_{ [1, n] = \{i_1, \cdots, i_k\} \cup \{j_1, \cdots, j _{n -k}\} } ||a_{i_1}||_{L^n (\Om, \mu)} \cdots ||f_{j_{n-k}}||_{L^n (\Om, \mu)}.\nonumber\\
 \label{eq:est1}
\end{eqnarray}
Note that  in  (\ref{eq:est1}) the  set $\{j_1, \cdots, j _{n -k}\}$    may be  empty  but   the set  $\{i_1, \cdots, i_k\}$ is always non-empty. Since $\sum_{i=1}^n ||a_i||_{L^n (\Om, \mu)} \le 1$, we have 
\begin{eqnarray}
\max_{ [1, n] = \{i_1, \cdots, i_k\} \cup \{j_1, \cdots, j _{n -k}\} } ||a_{i_1}||_{L^n (\Om, \mu)} \cdots ||f_{j_{n-k}}||_{L^n (\Om, \mu)} \le\nonumber \\
 \sum_{i=1}^n ||a_i||_{ L^n (\Om, \mu)} ( 1 + \sum _{i =1}^n \sum_{ k  = 1} ^n  ||f_i||_{L^n (\Om,\mu)}^k).\label{eq:est1a}
\end{eqnarray}
Clearly Lemma \ref{lem:est1}  follows from  (\ref{eq:est1})  and (\ref{eq:est1a}).
\end{proof}

We define a function $G : \Ll^n_n (\Om)  \to \R$ by setting
$$G([\vec{f}, \mu]) : =\sup_\Om g(\om)\cdot  2^n ( 1+ \sum _{i =1}^n \sum_{ k  = 1} ^n  ||f_i||_{L^n (\Om,\mu)}^k). $$



Let  us pick   an element $\vec{f_1}= ((f_1)_1, \cdots  ,(f_1)_n)\in (C_b(\Om) )^ n \cap B (\vec{f}, \delta, \mu)$ where $\delta$ is so small such that  the following  equalities hold:
\begin{equation}
\delta <\min \{ \frac{ 1}{2}, \frac{\eps} {16G([\vec{f}, \mu]))}\}, \label{eq:delta1}
\end{equation}
\begin{equation}
|T_{g, 1} (\vec{f_1}, \mu) - T_{g,1} (\vec{f}, \mu) | < {\eps\over 16}, \label{eq:eps}
\end{equation}
and 
\begin{equation}
|G([\vec{h}, \mu])- G([\vec{f_1}, \mu])| \le {\eps \over 8 } \label{eq:es1a}
\end{equation}
for all $h  \in B(\vec{f_1}, \delta, \mu)$.
The existence   of $\delta$ follows  from the  positivity of $G$,  from Lemma \ref{lem:est1} and from the continuity  of  the restriction   of $G$  to each fiber $\oplus ^n L^n (\Om,\mu)$.   

We  define a neighborhood $U_1  =U_1([\vec{f_1}, \mu])$   containing $\mu$ as follows
$$U_1: = U_{(g\cdot f_1 \cdots  f_n),   |(f_1)_1|^n,  \cdots , |(f_1)_n|^n),\,\lambda}(\mu),$$
where $\lambda $  depends on $g, \vec{f_1}, \mu$   and is so small  such that
\begin{equation}
\lambda < \frac{\eps}{8}   \label{eq:lambda1}
\end{equation} 
 and for $\mu' \in U_1$   we have
\begin{equation}
|G([\vec{f_1}, \mu']) -G([\vec{f_1}, \mu])| \le {\eps\over 8} .\label{eq:norm}
\end{equation}
The existence  of $\lambda$ satisfying (\ref{eq:norm}) is ensured by the  continuity  of  the function $G([\vec{f_1}, \mu])$ in variable $\mu$.

Now we shall show   that $O(\vec{f_1}, U_1, \delta) \ni [\vec{f}, \mu]$  satisfies  (\ref{eq:cont}). 
Assume that $[\vec{h}, \mu'] \in O(\vec{f_1}, U_1, \delta)$.  Then
\begin{eqnarray}
|T_{g,1} ([\vec{h},\mu']) - T_{g,1}([\vec{f}, \mu])|  \le  |T_{g,1} ([\vec{h},\mu']) - T_{g,1}([\vec{f_1}, \mu'])|\nonumber \\
 + |T_{g,1}([\vec{f_1}, \mu']) - T_{g,1}([\vec{f_1}, \mu])| +  |T_{g,1}([\vec{f_1}, \mu])-  T_{g,1}([\vec{f}, \mu])|.\label{eq:est2}
\end{eqnarray}
Let us estimate the first term in  the RHS of (\ref{eq:est2}). By  Lemma \ref{lem:est1}  we have
\begin{equation}
|T_{g,1} ([\vec{h}, \mu']) - T_{g,1} ([\vec{f_1}, \mu'])| \le ||\vec{h} - \vec{f_1}||_{L_n^n (\mu')} \cdot G([\vec{f_1}, \mu']).\label{eq:2a}
\end{equation}
 Taking into account (\ref{eq:norm}), (\ref{eq:es1a}),  and the choice  of $\delta$ in (\ref{eq:delta1}),  we obtain from  (\ref{eq:2a}), noting that $ \vec{f_1} \in B(\vec{f}, \delta, \mu) \LRA  \vec{f}\in B(\vec{f_1},\delta, \mu)$:
 \begin{eqnarray}
|T_{g,1} ([\vec{h}, \mu']) - T_{g,1} ([\vec{f_1}, \mu'])|\le \delta \cdot G([\vec{f_1}, \mu'])\le\nonumber \\
  \delta ( \frac{\eps}{ 8} + G([\vec{f_1}, \mu]))\le \delta ( \frac{\eps}{ 8}  + \frac{\eps}{8} +  G([\vec{f}, \mu])< \frac{3\eps }{16}.\label{eq:2b}
 \end{eqnarray}
 We estimate  the  second  term in the RHS of (\ref{eq:est2}) as follows, using  the fact   $\mu' \in U_1= U_1([\vec{f_1}, \mu])$ with $\lambda$ satisfying (\ref{eq:lambda1}):
\begin{equation}
 |T_{g,1}([\vec{f_1}, \mu']) - T_{g,1}([\vec{f_1}, \mu])| <  \lambda < \frac{\eps}{8}. \label{eq:est2term}
 \end{equation}
 Using (\ref{eq:2b}),(\ref{eq:est2term})  and  estimating the last term in the RHS of (\ref{eq:est2})  by (\ref{eq:eps}),  we obtain from  (\ref{eq:est2})
\begin{equation}
|T_{g,1} ([\vec{h},\mu']) - T_{g,1}([\vec{f}, \mu])|  \le  \frac{3\eps }{16} + \frac{\eps}{8} + \frac{\eps}{16} = \frac{3\eps}{8}.\label{eq:est3}
\end{equation}
 (\ref{eq:est3}) implies that  $T_{g,1} ([\vec{h},\mu']) \in (a, b)$. Hence  (\ref{eq:cont}) holds.
The proof   of  Proposition  \ref{prop:fisher} is completed.
\end{proof}

\begin{remark}\label{rem:fish} Let $[1]: \Om \to \R$ denote   the constant  function  taking the  value 1. Then $[1]\in C_b (\Om)$.  Let
$(M, \Om, \mu, p)$  be a 2-integrable  parametrized  measure model. By (\ref{for:local}) the 2-tensor field $T_{[1], 1}$ induces  the following
local statistical   2-tensor $g$ on $(M, \Om, \mu, p)$:
\begin{eqnarray}
g_x (V, W) = (T_{[1], 1})_{p(x)} (\p_V \ln \bar p(x), \p_W \ln \bar p(x))\nonumber\\
 = \int_\Om \p_V \ln \bar p(x)\cdot \p_W \ln \bar p(x) \, dp(x).\label{eq:t11}
 \end{eqnarray}
The RHS  of (\ref{eq:t11})  is the Fisher metric $g^F$. Thus,   the Fisher  metric  is  induced  from the strongly continuous covariant 2-tensor field $T_{[1],1}$  on $\Mm(\Om)$. In the same way,  the  Amari-Chentsov  tensor
$T^{AC}$  is induced  from the   strongly continuous covariant 3-tensor field  $T_{[1], 1}$ on $\Mm(\Om)$.
\end{remark}

\section{The uniqueness  of  the Fisher metric}\label{sec:uni}

Recall that $\delta _\om$  denotes the Dirac  measure concentrated at $\om \in \Om$. 

\begin{lemma}\label{lem:dirac}  (cf. \cite[Example 8.1.6]{Bogachev2007}).  The  set   of  all measures  of  the form $\sum_{i=1}^N  c_i \delta_{\om_i}$,
$c_i >0$, is dense  in $\Mm(\Om)$  in the weak topology.  The convex hull of  the  set of Dirac  measures is dense in the space $\Pp(\Om)$.
\end{lemma}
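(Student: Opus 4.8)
The plan is to prove both assertions by an explicit approximation argument, reducing to the standard density statement for probability measures (which is the cited Example 8.1.6 in Bogachev). First I would fix $\mu\in\Mm_+(\Om)$ and a basic weak neighbourhood $U_{f_1,\dots,f_k,\eps}(\mu)$ as in (\ref{eq:weak}), with $f_i\in C_b(\Om)$, and I need to produce a finite positive combination $\nu=\sum_{i=1}^N c_i\delta_{\om_i}$ with $|\int f_j\,d\mu-\int f_j\,d\nu|<\eps$ for all $j\in[1,k]$. The key observation is that the total mass $m:=\mu(\Om)$ is finite and positive (the case $\mu=0$ being trivial, approximated by $\nu=0$), so $\bar\mu:=m^{-1}\mu$ is a probability measure. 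By the probability-measure version of the density statement, there is a convex combination $\bar\nu=\sum_{i=1}^N \lambda_i\delta_{\om_i}$, $\lambda_i\ge 0$, $\sum\lambda_i=1$, with $|\int f_j\,d\bar\mu-\int f_j\,d\bar\nu|<\eps/m$ for each $j$. Setting $\nu:=m\bar\nu=\sum_i (m\lambda_i)\delta_{\om_i}$ gives $c_i:=m\lambda_i>0$ (dropping zero coefficients) and $|\int f_j\,d\mu-\int f_j\,d\nu|=m\,|\int f_j\,d\bar\mu-\int f_j\,d\bar\nu|<\eps$, which is exactly the required condition. Since every point of $\Mm_+(\Om)$ and every basic neighbourhood is of this form, this shows the finite positive combinations of Dirac measures are weakly dense.

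For the second assertion, let $\mu\in\Pp_+(\Om)$, so $\mu(\Om)=1$; then the construction above already produces $\nu=\sum_i c_i\delta_{\om_i}$ with $\sum_i c_i=\sum_i m\lambda_i=1$, i.e. $\nu$ lies in the convex hull of the Dirac measures, and $\nu$ is in the prescribed weak neighbourhood of $\mu$. Hence the convex hull of $\{\delta_\om:\om\in\Om\}$ is weakly dense in $\Pp_+(\Om)$. Alternatively, one may simply note that the weak density in $\Mm_+(\Om)$ together with the continuity of $\mu\mapsto\mu(\Om)$ on the subset where the approximating measures have controlled mass lets one renormalise, but the direct argument above is cleaner because the normalisation is automatic.

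The only genuine input is the probability-measure density theorem, which I would just invoke from Bogachev (Example 8.1.6) rather than reprove; if a self-contained proof were wanted, the standard route is to partition $\Om$ into finitely many Borel sets of small oscillation for each $f_j$ (possible since $\Om$ is a separable metric space and the $f_j$ are continuous, though continuity of the $f_j$ alone on a general metric space does not give uniform small oscillation, so one uses that weak convergence is tested only against continuous bounded functions and approximates via a finite cover by small balls), pick a point in each piece, and assign mass $\mu$(piece). The mild subtlety to watch is only bookkeeping: discarding the Dirac atoms whose coefficient came out $0$ so that all $c_i$ are strictly positive as required in the statement, and handling the degenerate case $\mu=0$ separately. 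I do not expect any real obstacle here; this lemma is essentially a packaging of a textbook fact, and its role is to supply the dense set of discrete measures used in Section~\ref{sec:uni} to pass from finite sample spaces to the general case.
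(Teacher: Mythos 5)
Your main argument is correct, but it takes a different route from the paper. You reduce the general case to the probability case by normalising ($\bar\mu=\mu/m$, approximate within $\eps/m$, rescale), and you take the probability-measure statement itself as a black box from Bogachev's Example 8.1.6; the second assertion then comes for free since $m=1$ makes the rescaling trivial. The paper instead reproves the approximation from scratch, following Bogachev's argument: it approximates each test function $f_i$ uniformly (within $\eps/4$) by a simple function $g_i$, passes to the common refinement of the finitely many level-set partitions, and defines $\nu=\sum c_{l_1\cdots l_k}\delta_{\om_{l_1\cdots l_k}}$ by giving each cell's $\mu$-mass to a chosen point of that cell; this matches $\int g_i\,d\mu=\int g_i\,d\nu$ exactly, hence lands in the prescribed neighbourhood, and since the construction preserves total mass ($\sum c_i=\mu(\Om)$) the second assertion about the convex hull of Dirac measures in $\Pp_+(\Om)$ is immediate. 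What your version buys is brevity, at the cost of deferring essentially all content of the lemma to the citation (your proof of the second assertion is in effect the cited statement itself); what the paper's version buys is a self-contained argument valid verbatim for finite positive Borel measures, with exact mass preservation. One small correction to your aside on a self-contained proof: the worry about uniform small oscillation and ``finite covers by small balls'' is misplaced. The finite partition is obtained by partitioning the bounded \emph{range} of each $f_j$ into small intervals and pulling back, i.e.\ by uniform approximation of the bounded measurable functions $f_j$ by simple functions; no uniform continuity of $f_j$ and no compactness or total boundedness of $\Om$ is needed, and this is exactly the step the paper uses.
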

\begin{proof} 1.  A version  of  Lemma \ref{lem:dirac}  for  finite  Baire measures   is proved  in \cite[Example 8.1.6]{Bogachev2007}.  We apply Bogachev's   argument  for  the proof  of Lemma \ref{lem:dirac}.  Suppose we are given a neighborhood $U\ni \mu$ of the form (\ref{eq:weak}).
We may assume that  the total  variation norm $||\mu||\le 1$. There are simple (step) functions $g_i$ such that
$\sup_{\om\in \Om} |f_i (\om) - g_i (\om)| < \eps/4$ for all $i \in [ 1,k]$. To prove Lemma \ref{lem:dirac} it  suffices to  show that $U$ contains a
measure $\nu =\sum_{i=1}^N  c_i \delta_{\om_i}$  such that  for all $i \in [1, k]$  we have
\begin{equation}
 \int_\Om g_i d\mu = \int_\Om  g_i  d\nu. \label{eq:dirac}
 \end{equation}
Let  $\Om = \cup_{j=1}^{n_i}  A_i^j$ be  a  finite  partition  into
disjoint measurable sets    corresponding  to $g_i$, i.e. $g_i = \sum a_i ^ j \chi_ {A_i^j}$.  
Then
$$ \Om = \cup_{l_1, \cdots, l_k} A_{1} ^{l_1} \cap  A_2^{l_2} \cap \cdots  \cap A_{k} ^{l_k} $$
is  a finite  partition  corresponding  to $g_i$ for all $ i \in [1, k]$.  
Set $c_{l_1 \cdots l_k} : = \mu(A_{1} ^{l_1} \cap  A_2^{l_2} \cap \cdots  \cap A_{k} ^{l_k})$  and  let $\om_{l_1 \cdots l_k}$  be  a point in $A_{1} ^{l_1} \cap  A_2^{l_2} \cap \cdots  \cap A_{k} ^{l_k}$.  Then (\ref{eq:dirac}) holds
for $\nu = \sum_{l_1, \cdots ,l_k} c_{l_1 \cdots l_k}  \delta_{\om_{l_1 \cdots l_k}}$. Since $\mu$ is a non-negative measure,  we have $c_{l_1 \cdots l_k} \ge  0$.
This completes the proof of  the first assertion of  Lemma \ref{lem:dirac}.  

2. The second   assertion follows  immediately, since by the above construction  of  
$\sum_{i=1}^N  c_i \delta_{\om_i}$ we have $\sum c_i = \mu(\Om)$.
\end{proof}

{\it  Proof  of Theorem \ref{thm:main2}}.   Assume that    $F$ is  a metric  defined  on all 2-integrable  statistical models $(M, \Om, \mu, p)$  that satisfies  the condition of  Theorem 1.4  and $\tilde F_\Om$  denotes the associated  strongly continuous    quadratic form on $\Mm(\Om)$.  Denote by $\tilde g ^F _\Om$  the quadratic  form on $\Mm(\Om)$ that is associated with  the Fisher metric $g^F$.  We shall show that  $\tilde F_\Om =  c\cdot  \tilde g ^ F_\Om$  for some  constant $c$.  

  By Proposition \ref{cor:chentsov} it suffices to   consider the case   $\Om$ is non-discrete. 
Let  $\kappa_n: \Om \to \Om_n$ be a statistic such that  $\kappa_n (\Om) = \Om_n$.   Let  us choose    points $\{\om_1, \cdots, \om _n \}\in \Om$   such that $\kappa_n (\om_i)$  are distinct
points in $\Om_n$.  Let us consider the following map
$$\Om_n \stackrel{i_n}{\to } \Om \stackrel{\kappa_n}{\to} \Om_n,$$
where $i_n $ identifies $\kappa _n (\om_i)$ with  $\om_i$  for  all $i \in [1, n]$.  Note that $i_n$ is also a statistic  and  $\kappa_n \circ i_n = Id$. Let $\mu_n ^+ \in \Pp_+ (\Om_n)$.   
Observe that $(\Pp_+ (\Om_n), \Om_n, \mu_n^+, Id)$ is a   2-integrable statistical model. By the monotonicity assumption  of $F$, and using  $\kappa _n \circ i_n = Id$,  we conclude that
the    metric $F$ defined  on the 2-integrable  statistical model  $(\Pp_+ (\Om_n), \Om, (i_n)_*(\mu_n^+), (i_n)_*(Id))$  is   defined  uniquely  by the metric $F$  defined on
the 2-integrable  statistical model\\
 $(\Pp_+ (\Om_n), \Om_n, \mu_n^+, Id)$.   By Proposition \ref{cor:chentsov} the   metric $F$  defined  on the  2-integrable  statistical model $(\Pp_+ (\Om_n), \Om_n, \mu_n ^+, Id)$ coincides  with the Fisher metric up  to   a multiplicative constant  $c$.   Hence,  the   restriction  of $\tilde F_\Om$ to the subspace of $ \Ll_2 ^2 (\Om)$
$$\Ll_2 ^2(\om_1, \cdots, \om_n):  \{[f_1, f_2, \mu_n]\in \Ll_2^2 (\Om)|\,  \mu_n  = \sum_{i=1}^n c_i \delta _{\om_i},  c_i > 0 \}$$
coincides with  the restriction of $\tilde g^F_\Om$  up to the multiplicative constant $c$,  since $\tilde F_\Om$ is strongly continuous.

Now we shall  show that  the constant  $c$  does  not depend  on the choice  of   a collection $\{\om_1, \cdots, \om_n\}$.
Let $\{\om_1 ', \cdots, \om_m'\}$ be another  collection of  distinct $m$ points on $\Om$. Let $\Om_N : =\{ \om_1'', \cdots  , \om _N ''\}$
be  the union of $\{\om_1, \cdots, \om_n \}$  and  $\{\om_1 ', \cdots, \om_m'\}$. 
We consider  the following    sequence  of  statistics
$$\Om_n\stackrel{i_{n, N}}{\to }\Om_N \stackrel{i_N}{\to} \Om\stackrel{\kappa_N}{\to} \Om_N \stackrel{\kappa_{N, n}}{\to}\Om_n,$$
where $i_{n, N}$ and  $i_{N}$ are the natural embeddings  and $\kappa_N$ and $\kappa_{N, n}$  are  sufficient  statistics  such that  
$\kappa_N \circ i_N = Id$ and $\kappa_{N, n}  \circ  i _{n, N} = Id$.
By Proposition \ref{cor:chentsov},  the constant $c$ that depends on $\{ \om_1, \cdots, \om_n\}$   equals  the constant $c ''$ that depends  on $\{\om_1'', \cdots  , \om _N ''\}$.  In the  same way
we  prove  that  the constant $c'$ that depends on $\{\om_1 ', \cdots, \om_m'\}$  equals the constant $c ''$ that depends  on $\{\om_1'', \cdots  , \om _N ''\}$. Hence  the constant  $c$  does not depend  on the  choice  of $\{\om_1, \cdots, \om_n\}$.

We  denote by $\Dd^+ (\Om )$ the set  of all measures  $\mu_n=  \sum_{i=1}^n c_i \delta _{\om_i},  c_i > 0 $, where $\om_i \in \Om$.
 By Lemma \ref{lem:dirac} the  subset 
$$\Ll^2_2 (\Om, \Dd^+) : = \{[f_1, f_2, \mu]\in \Ll^2_2(\Om)| \, \mu \in \Dd^+(\Om)\}$$
 is dense  in $\Ll^2_2(\Om)$ in the   mixed topology.   Since the restriction  of $\tilde F_\Om$ to $\Ll^2_2 (\Om, \Dd^+)$ coincides with the restriction of $\tilde g^F_\Om$ up to the multiplicative  constant $c$,
taking into account the   strong continuity of $\tilde F_\Om$, this completes the proof of  Theorem \ref{thm:main2}.\qed

\

\section{Appendix:  The Chentsov  uniqueness  theorem}
In this  Appendix  we  recall  a  reformulation  of the Chentsov  theorem  \cite[Theorem 11.1, p. 159]{Chentsov1978}  on the uniqueness of the Fisher metric in the  language of information geometry  by  Amari and Nagaoka (Proposition \ref{prop:chentsova}), which is  simpler  than the original  formulation by Chentsov in  the category language. In  Proposition \ref{cor:chentsov}
we  formulate  a result  in \cite{AJLS2012} that  characterizes the Fisher metric on finite  sample spaces via the monotonicity.  Then we  
 discuss in Remark \ref{rem:positive} some  problems  in generalizing the  Chentsov theorem   to     a larger class of  measure  spaces   that  contains also   non-discrete measure spaces.

 

Let us denote by $\Pp_+ (\Om_n)$   the subset  of $\Pp(\Om_n)$  that consists of positive  measures.

\begin{proposition}\label{prop:chentsova} 
(\cite [Theorem 2.6, p. 38]{AN2000}) Assume that $\{(g_n)\}_{n=1}^\infty$ is  a sequence of Riemannian metrics on $\Pp_+ (\Om_n)$  for each $n$ that are invariant with respect to sufficient 
statistics; i.e., for all  $n, m, S \subset \Pp_+(\Om_n)$, and $F : \Om_n  \to \Om_m$ such that  $F$ is a sufficient 
statistic for $S$, the induced metrics  on $S$ and $S_F$ are assumed to be invariant. Then there exists a positive real number $c$  
such that, for all $n$, $g_n$ coincides with the Fisher metric on $\Pp_ +(\Om_n)$ scaled by a factor of $c$. 
\end{proposition}

Amari and Nagaoka did not    supply  their  proof  of  Proposition  \ref{prop:chentsova}. We 
 recommend  the reader to    \cite{Campbell1986}  for a  slight generalization  of the Chentsov theorem, whose proof is close  to the  original Chentsov's proof.
For the reader convenience we recall   the  following monotonicity characterization of the Fisher    metric  on finite sample spaces.

\begin{proposition}\label{cor:chentsov}(\cite[Corollary 4.11]{AJLS2012})  Let $F$ be a continuous local statistical quadratic  2-form  defined  on statistical  models  associated with finite sample spaces $\{\Om_n\}$  such that $F$ is monotone  under  sufficient  statistics.
Then $F$  coincides  with the Fisher metric   up to a  multiplicative constant.
\end{proposition}

\begin{remark}\label{rem:positive} (1)  Chentsov  defined
the  Fisher  metric  only on  the positive  sector  $\Pp_+(\Om_n)$
of the  space of all  probability  measures because  the expression  for the Fisher metric in (\ref{for:fisher})  is well-defined  only  on $\Pp_+(\Om_n)$. In this paper     we follow
the approach  in \cite{AJLS2012} by requiring that    an information metric  $F$ is obtained by  (\ref{for:local}) from
the associated  2-form $\tilde  F$,  which is not only defined  on
$\Pp_+(\Om_n)$   but  also   defined    on $\Mm (\Om_n)$ (in general case, on $\Mm(\Om)$)  and hence  on $\Pp (\Om_n)$ (resp.  on $\Pp (\Om)$).   This  small  difference  is important, since   for  a non-discrete   space $\Om$  we do not know
how  to define  a  notion of a positive measure without  using a reference  measure $\mu_0$. Since the Fisher metric  $g^F$ satisfies  the mentioned   requirement, see Example \ref{exam:strong},  Proposition \ref{cor:chentsov}
is  equivalent  to the Chentsov  uniqueness theorem. Clearly, Theorem \ref{thm:main2}   generalizes  Proposition \ref{cor:chentsov}.  

(2) As  we   mentioned above, the original Chentsov theorem can be 
equivalently reformulated  in terms  of  the  associated    form $\tilde F$.  Note  that  the space  $\Pp(\Om_n)$  (resp.  $\Mm(\Om_n)$) is not a manifold,  or a manifold  with boundary, but a stratified space  which admits   different embeddings  into   Euclidean  spaces. 
In  \cite{AJLS2013}  and in the  present paper   we  do not  consider     smooth  tensor fields on  $\Pp(\Om_n) $  (resp. on $\Mm(\Om_n)$)  but    (strongly or point-wise) continuous       tensor fields on $\Mm(\Om)$ which  do not  require the   notion of a  smooth structure  on $\Mm(\Om)$.

(3) In  \cite[\S 5]{MC1991}  Morozova-Chentsov  also suggested   a method   to  extend   the   Chentsov uniqueness  theorem  to  the case of 
non-discrete  measure spaces $\Om$. Their idea   is similar to the Amari-Nagaoka idea, namely they wanted to consider  a Riemannian metric on  infinite  measure spaces as limit   of  Riemannian metrics on finite measure spaces. 
They did not  discuss a condition  under which   such a  limit  exists.  In fact,  they did not give a definition   of limit  of   such  metrics.   
If the limit exists     they   called   it {\it finitely generated}.  They stated  that the Fisher  metric  is the unique   finitely generated metric  that is invariant  under  sufficient statistics  
(resp.  that is monotone).
One may speculate that since  such a   Riemanian metric  depends    on  base measures $\mu$ and  tangent  vectors  at $\mu$  Morozova-Chentsov's approach  requires   a definition of topology   on the space $\Ll_2 ^2 (\Om)$.   

\end{remark}

\section*{Acknowledgement} 
The author   thanks  Shun-ichi Amari, Nihat Ay,   Lorenz Schwachh\"ofer  and Alesha  Tuzhilin for valuable conversations. She is grateful to  Vladimir Bogachev  and J\"urgen Jost for their helpful comments and suggestions.  The final version   of this manuscript is   greatly improved  thanks  to  critical  helpful  suggestions of the anonymous  referees. She  acknowledges the  VNU for Sciences in Hanoi for excellent working conditions  and financial support during  her visit when a part  of this note has been done.

\end{document}